\numberwithin{equation}{section}
\theoremstyle{plain}
\newtheorem{theorem}{Theorem}[section]
\newtheorem{corollary}[theorem]{Corollary}
\newtheorem{proposition}[theorem]{Proposition}
\newtheorem{remark}[theorem]{Remark}
\newtheorem{lemma}[theorem]{Lemma}
\newtheorem{question}[theorem]{Question}
\theoremstyle{definition}
\newtheorem{definition}[theorem]{Definition}
\begin{document}

\author{Younes Benyahia}

\title{Exotically knotted 2-spheres and the fundamental groups of their complements}

\email{youneselmaamoun.benyahia@gmail.com}

\subjclass[2020]{Primary 57K45; Secondary 57K40, 57R40, 57R52, 57R55}

\begin{abstract}
    We show that for any finitely presented group $G$, there is a simply connected closed 4-manifold containing an infinite family of topologically isotopic but smoothly inequivalent 2-links whose 2-link group is $G$. We also show that, if $G$ satisfies the necessary topological conditions, these 2-links have nullhomotopic components.
\end{abstract}

\maketitle

\section{Introduction}

Unless stated otherwise, in what follows, all manifolds are smooth, oriented and connected and their submanifolds are smooth. We denote the tubular neighborhood of a submanifold $N$ by $\nu(N)$. We begin by defining the main objects under study.

\begin{definition}\label{links and exotic links definition}
Let $M$ be a closed 4-manifold.

    \begin{itemize}
    
        \item We say that two smooth submanifolds of $M$, $N_1$ and $N_2$, are smoothly (resp. topologically) equivalent if there exists a diffeomorphism (resp. homeomorphism) $F:M\longrightarrow M$ satisfying $F(N_1)=N_2$. Moreover, If $F$ is isotopic through diffeomorphisms (resp. homeomorphisms) to the identity, we say that $N_1$ and $N_2$ are smoothly (resp. topologically) isotopic.
        \item An $n$-component 2-link $\Gamma$ in $M$ is a smooth 2-dimensional submanifold diffeomorphic to $(S^2)_1 \sqcup (S^2)_2 \sqcup ... \sqcup (S^2)_n$, the disjoint union of $n$ copies of the 2-sphere. Its 2-link group is $\pi_1 \left( M\setminus \mathring{\nu} \left( \Gamma \right) \right)$. 
        We call a 1-component 2-link a 2-knot.
        \item Let $\Gamma_1 \subset M$ be a 2-link. We say that $\Gamma_1$ is exotic if there exists a set
        \begin{equation*}\label{exotic set definition}
            \mathcal{S}:= \{ \Gamma_k \subset M \vert k\in \mathbb{N} \}
        \end{equation*}
        of 2-links that are pairwise topologically isotopic but not smoothly equivalent.
        \item A nullhomotopic 2-link is a 2-link whose components are nullhomotopic.

    \end{itemize}

\end{definition}

Next, we recall the notions of group weight and rank. 
\begin{definition}
 
     Let $G$ be a finitely generated group. Its weight $w(G)$ is the minimal cardinality of a subset of $G$ whose normal closure is $G$. If $G$ is abelian and $G_T$ is its torsion subgroup then its rank, $rank(G)$, is the torsion-free rank of the quotient $G/G_T$ (the torsion-free part). 
\end{definition}

There has been plenty of interest in exotic 2-knots and 2-links in 4-manifolds, for example \cite{Akbulut2014IsotopingSpheres, auckly2023smoothly,aucklykimmelvrub2015stable,auckly2019isotopy,bais2023recipe, hayden2020exotically, hayden2021brunnian, hayden2023one, jinmiyazawa2023gauge, torres2020exoticknots, torres2023topologicallymanygroups}.
This note investigates the possible 2-link groups for exotic 2-links in closed simply-connected 4-manifolds, cf., \cite{auckly2023smoothly}, \cite{bais2023recipe}, \cite[Theorem B and Theorem 5.2]{hayden2020exotically}, \cite{kim2006modifying}, \cite[Theorem 5.1 and Theorem 5.2]{kim2008smooth}, \cite{kimruber2008topological}, and \cite{torres2023topologicallymanygroups} for similar results in this direction. We show that any finitely presented group arises as an exotic 2-link group. We also show that if the group arises as a nullhomotopic 2-link group, it arises as a nullhomotopic exotic 2-link group for some ambient 4-manifold. Our 2-links are produced by surgery on exotic 4-manifolds, similarly to \cite{bais2023recipe}, \cite{fintushel1994fake}, \cite{torres2020exoticknots} and \cite{torres2023topologicallymanygroups}. The exotic 4-manifolds in question are constructed using results of \cite{gompf1995new} and \cite{park2007geography}.

Before we state the main result, we point out that not every finitely presented group can be a 2-link group of a nullhomotopic 2-link in a closed simply-connected manifold. In the following proposition, we state a necessary condition. Theorem \ref{main theorem} implies that it is sufficient for the group to be realized even as an exotic nullhomotopic 2-link group.

\begin{proposition}\label{obstruction proposition}
For $n\in \mathbb{N}$, let $\Gamma$ be an $n$-component nullhomotopic 2-link in a closed simply-connected 4-manifold $M$. Let $G$ be its 2-link group, then $G$ is non-trivial and
    \begin{equation}\label{obstruction}
        w(G)=rank(H_1 ( G ) )=n .
    \end{equation}
    \end{proposition}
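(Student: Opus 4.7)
My plan is to first compute $H_1(G) \cong \mathbb{Z}^n$ and then to sandwich $w(G)$ between $rank(H_1(G))$ (which is always a lower bound on the weight) and $n$ (realized by the meridians of the components of $\Gamma$). Write $\Gamma = S_1\sqcup\cdots\sqcup S_n$ and $X = M\setminus\mathring{\nu}(\Gamma)$, so $G = \pi_1(X)$.

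The first step is to trivialize the normal bundles. Since each $S_i$ is nullhomotopic, $[S_i]=0 \in H_2(M;\mathbb{Z})$ by Hurewicz, and hence the self-intersection $[S_i]\cdot[S_i]$, which equals the Euler number of $\nu(S_i)$, vanishes. Because oriented rank-$2$ vector bundles over $S^2$ are classified by their Euler number, $\nu(S_i)$ is trivial, so I may identify $\nu(\Gamma) \cong \sqcup_{i=1}^{n} S^2 \times D^2$ and $\partial\nu(\Gamma) \cong \sqcup_{i=1}^{n} S^2 \times S^1$. Let $\mu_i$ denote the meridian (the $S^1$ factor of the $i$-th boundary component) of $S_i$.

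Next, I compute $H_1(X)$ from the long exact sequence of the pair $(M,X)$, using excision:
$$H_2(M)\xrightarrow{\;\iota\;} H_2(\nu(\Gamma),\partial\nu(\Gamma)) \longrightarrow H_1(X) \longrightarrow H_1(M) = 0.$$
A Künneth calculation gives $H_2(\nu(\Gamma),\partial\nu(\Gamma)) \cong \mathbb{Z}^n$, generated by meridian disks $D_i$, and $\iota([\Sigma]) = \sum_i ([\Sigma]\cdot[S_i])\,[D_i]$, which vanishes because every $[S_i]=0$. Hence $H_1(G) = H_1(X)$ is freely generated by $[\mu_1],\ldots,[\mu_n]$, so $rank(H_1(G))=n$.

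Finally, gluing the components of $\nu(\Gamma)$ back on one at a time and invoking van Kampen at each stage shows that $\pi_1(M) = G/\langle\langle \mu_1,\ldots,\mu_n\rangle\rangle$: each $S^2\times D^2$ filler is simply connected and $\mu_i$ generates the $\pi_1$ of the boundary component being filled. Since $M$ is simply connected, $G=\langle\langle\mu_1,\ldots,\mu_n\rangle\rangle$, giving $w(G)\leq n$. Conversely, any normal generating set of $G$ descends to a generating set of the abelianization $H_1(G)\cong \mathbb{Z}^n$, so $w(G)\geq n$. The only delicate point is the disconnected intersection in the van Kampen step, which the one-at-a-time attachment handles; apart from that the argument is a direct computation.
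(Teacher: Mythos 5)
Your proof is correct, and it follows the same overall sandwich strategy as the paper ($n \geq w(G) \geq \operatorname{rank}(H_1(G)) = n$), but it executes the key step by a genuinely different and more self-contained argument. The paper reverses the construction: it views $\Gamma$ as the belt 2-link of surgery on a 4-manifold $\Tilde{M}$ with $\pi_1(\Tilde{M})=G$ along its belt 1-link $\mathcal{L}$, gets $n\geq w(G)$ from $\mathcal{L}$ being a generator 1-link, and obtains $\operatorname{rank}(H_1(G))=n$ by invoking Lemma \ref{nullhomologous spheres}, whose proof runs through a unimodular-sublattice and Betti-number count. You instead work directly with the exterior $X=M\setminus\mathring{\nu}(\Gamma)$: the long exact sequence of the pair $(M,X)$ together with the vanishing of the intersection numbers $[\Sigma]\cdot[S_i]$ gives $H_1(X)\cong\mathbb{Z}^n$ generated by meridians, and a one-component-at-a-time van Kampen argument gives $w(G)\leq n$; the latter is the same content as the paper's ``belt 1-link is a generator 1-link,'' just phrased without the surgery formalism. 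Your route avoids Lemma \ref{nullhomologous spheres} entirely (at the cost of not reusing a lemma the paper needs elsewhere anyway), and you also explicitly justify the triviality of the normal bundles from nullhomotopy, a point the paper leaves implicit when it performs surgery on $\Gamma$. Both arguments are sound; yours is the more elementary and verifiable in isolation, the paper's is the more economical within its own framework.
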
 

    The main result of this note is the following theorem. 

\begin{theorem}\label{main theorem}
    For any finitely presented group $G$, there is a closed simply-connected 4-manifold $M_G$ and a $w(G)$-component exotic 2-link $\Gamma \subset M_G$ whose 2-link group is $G$. If, in addition, $G$ satisfies condition \eqref{obstruction} then $\Gamma$ is a nullhomotopic 2-link.
\end{theorem}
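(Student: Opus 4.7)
The plan is to adapt the recipe of \cite{bais2023recipe, torres2020exoticknots, torres2023topologicallymanygroups}, combining a group realization step with a Fintushel--Stern type surgery. Starting from a normal presentation $G=\langle x_1,\dots,x_n \mid r_1,\dots,r_m\rangle$ with $n=w(G)$, I will first construct a closed simply-connected 4-manifold $M_G$ and an $n$-component 2-link $L\subset M_G$ with $\pi_1(M_G\setminus \mathring{\nu}(L))\cong G$. The idea is to assemble $M_G$ from a 1-handlebody realizing $F_n$, attach 2-handles along curves representing the relators, and cap off so as to become simply-connected; the components of $L$ then arise as 2-spheres dual to the normal generators. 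To enable a smooth variation later, I would arrange via the Gompf--Park geography \cite{gompf1995new, park2007geography} that $M_G$ contain a cusp neighborhood, i.e., a torus $T$ of square zero disjoint from $L$, in a region with nontrivial Seiberg--Witten basic classes.

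Next, for a family of knots $\{K_k\}\subset S^3$ with pairwise distinct Alexander polynomials, I would apply rim surgery (a variant of Fintushel--Stern surgery adapted to 2-links) along $T$ and a curve in $M_G\setminus L$ to produce an infinite family of 2-links $\Gamma_k\subset M_G$ in the same smooth ambient manifold. Each $\Gamma_k$ has complement fundamental group $G$, since the surgery is performed away from the subset controlling $\pi_1$. Topological isotopy of the $\Gamma_k$ in $M_G$ should follow from Freedman's theorem applied to the complement together with the topological triviality of rim surgery.

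The main obstacle will be verifying smooth inequivalence. I plan to argue contrapositively: a smooth equivalence $\Gamma_k\cong\Gamma_j$ would induce a diffeomorphism of their complements in $M_G$, hence a diffeomorphism between the underlying Fintushel--Stern surgered manifolds; such a diffeomorphism would force their Seiberg--Witten basic classes to agree, but by the product formula of \cite{fintushel1994fake} these encode the Alexander polynomials $\Delta_{K_k}$ which were chosen to be pairwise distinct, a contradiction. Finally, for the nullhomotopy claim, condition \eqref{obstruction} will allow the initial handle-theoretic construction to be arranged so that each component of $L$ represents zero in $H_2(M_G;\mathbb{Z})$; since $M_G$ is simply-connected, $\pi_2\cong H_2$, and the components of the resulting exotic 2-link are therefore nullhomotopic.
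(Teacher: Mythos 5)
Your outline is a genuinely different strategy from the paper's, but as written it has a gap at its core. You propose to fix the ambient manifold $M_G$ and a 2-link $L$ once and for all, and then vary the link by a Fintushel--Stern/rim-type surgery on a torus $T$ in the complement, detecting the variation by Seiberg--Witten invariants. There is a tension here that the outline does not resolve: if $T$ sits in a region of $M_G$ carrying nontrivial Seiberg--Witten basic classes, then knot surgery on $T$ changes the diffeomorphism type of the \emph{ambient} manifold, so the resulting links do not all live in the same smooth $M_G$; if instead $M_G$ has vanishing Seiberg--Witten invariants --- which is what the realization step tends to force, since making the ambient manifold simply connected while keeping a $G$-complement naturally produces manifolds of the form $Z\# S^2\times S^2$ (cf.\ Remark \ref{auckly-torres remark}) --- then the proposed detection collapses. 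The paper's proof is organized precisely to escape this: the knot surgeries are performed \emph{upstream}, on the $\pi_1=G$ manifold $X^G$ (Proposition \ref{proposition 2.2 park}), producing honestly non-diffeomorphic manifolds $M_k^G$; the subsequent surgery on a generator 1-link simultaneously kills $\pi_1$ and, via the Moishezon trick together with dissolving results, converts the difference into $S^2\times S^2$-stabilization, so that all the surgered manifolds $M_k^*$ become diffeomorphic (Lemma \ref{ambient manifold}) while the belt 2-links still remember $M_k^G$, because surgery on $\Gamma_k$ recovers $M_k^G$. That stabilization lemma is the heart of the argument and has no counterpart in your plan. A further difficulty specific to your route: rim surgery along a 2-\emph{sphere} component is delicate, since a rim torus over a curve on $S^2$ is compressible, and the standard nontriviality and detection results for rim surgery require the surface to have positive genus.

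The topological isotopy step is also not justified for arbitrary $G$. Freedman's technology applied to a complement with $\pi_1\cong G$ requires $G$ to be a good group, and the known topological-triviality theorems for (twist) rim surgery carry $\pi_1$ hypotheses; for a general finitely presented $G$ this step would fail or be unknown. The paper avoids ever doing surgery theory over $G$: the homeomorphism comes from the classification of the simply-connected pieces (the homeomorphisms $f_k$ of Proposition \ref{park's theorem}, which restrict to the identity off $\nu(T')$), and is then upgraded to a topological isotopy in the simply-connected ambient manifold $M_1^*$ using Wall's theorem together with Quinn and Perron. Finally, for the nullhomotopy claim you should check your handle construction against the precise homological criterion of Lemma \ref{nullhomologous spheres}; asking only that the components of $L$ be nullhomologous in $M_G$ is the right conclusion, but obtaining it requires the condition $w(G)=\mathrm{rank}(H_1(G))=n$ exactly as in \eqref{obstruction}, not merely an arrangement of the handles.
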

    
    We note that we are considering the empty 2-link to be nullhomotopic and to have trivial 2-link group. We also remark that, for arbitrary $G$, by considering the manifold $M_G \# r S^2 \times S^2$ instead of $M_G$ and the 2-link $\Gamma \sqcup r (S^2\times \{ pt\} )$ instead of $\Gamma$, we can increase the number of components of the 2-link in Theorem \ref{main theorem} to $w(G)+r$ without changing the 2-link group $G$. However, these new 2-links are not nullhomotopic.

    We do not aim to address minimizing the topology of the ambient 4-manifold. In particular, the construction involves a building block from \cite[Theorem 4.1]{gompf1995new}. As noted in \cite[Example 2]{baldridge2007symplectic}, such manifolds can have large Euler characteristic, and thus, so do the ambient manifolds $M_G$ of our construction. For example \cite{Akbulut2014IsotopingSpheres, auckly2023smoothly, aucklykimmelvrub2015stable, bais2023recipe, fintushel1994fake, hayden2020exotically, hayden2021brunnian, kim2006modifying, kim2008smooth, kimruber2008topological, torres2023topologicallymanygroups, torres2020exoticknots}
    produce exotic 2-knots and 2-links for many fundamental groups in smaller 4-manifolds. 
\begin{remark}\label{auckly-torres remark}
    We note that the constructed 2-links have trivial normal bundles. In the case of non-trivial normal bundles, it is not always known which groups can arise as 2-knot groups even if we drop the exoticness conclusion, cf. \cite[Corollary 5.8]{KRONHEIMER1993773} and \cite{hughesruberman2024simple}. We also mention that our ambient manifolds are of the form $Z\# S^2 \times S^2$ where $Z$ is a simply connected 4-manifold. As a consquence, they have trivial Seiberg-Witten invariants.
\end{remark}

\begin{remark}
    Take $G$ to be $F_n$ the free group of order $n$. The nullhomotopic exotic 2-link obtained in Theorem \ref{main theorem} is composed of topologically unknotted spheres by a result of Sunukjian \cite[Theorem 7.2]{sunukjian2015surfaces}. In \cite[Theorem A]{bais2023recipe} a 2-link with the same properties is obtained and shown to be non-trivially constructed by means of Brunnian exoticness \cite[Definition 2]{bais2023recipe}.
\end{remark}

The following paragraph provides context for seeking nullhomotopic exotic 2-links as opposed to the simpler task of seeking exotic 2-links of arbitrary homology (see Remark \ref{primitve 2-links} for how to find them).
It is unknown whether or not there exist exotic 2-knots (or any orientable surfaces) in $S^4$. But the question is settled for a setting that mimics that situation, that of nullhomologous exotic 2-knots (or tori) in bigger simply-connected 4-manifolds \cite{fintushel1994fake} \cite{hoffman2020null}, \cite{torres2020exoticknots}. In these settings, the examples address the case of the 2-knot group $\mathbb{Z}$ (the unknot group) which is topologically the most rigid one (see \cite[Theorem 11.7A]{freedman2014topology} and \cite{conwaypowellunknotting}). It is reasonable to guess that the same holds for the other (less rigid) 2-knot groups in $S^4$. This guess is implied by Theorem \ref{main theorem}: if $Z$ is a simply-connected 4-manifold, the family
$$\mathcal{G}_Z:= \{ \pi_1(Z \setminus \mathring{\nu} (K)) \vert K\subset Z \text{ a 2-knot} \}$$
satisfies \eqref{obstruction}, and therefore, we deduce the following corollary.

\begin{corollary}
    For any $G\in \mathcal{G}_Z$, there is a closed simply-connected 4-manifold $M_G$ and an exotic nullhomotopic 2-knot $K\subset M_G$ such that $\pi_1(M_G \setminus \mathring{\nu} ( K)) \cong G$.
\end{corollary}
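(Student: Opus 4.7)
The plan is to reduce the corollary to a direct application of Theorem \ref{main theorem}, by checking that every $G\in\mathcal{G}_Z$ satisfies condition \eqref{obstruction} with $n=1$. Once this is in hand, Theorem \ref{main theorem} immediately produces, for each such $G$, a simply-connected closed 4-manifold $M_G$ together with an exotic nullhomotopic $w(G)$-component 2-link whose 2-link group is $G$; since $w(G)=1$, the 2-link has a single component, i.e.\ it is a 2-knot $K$, which is exactly the output required by the corollary.

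For the verification, I would fix a 2-knot $K\subset Z$ in a closed simply-connected $Z$ and analyze $G:=\pi_1(Z\setminus\mathring{\nu}(K))$ via the decomposition $Z=(Z\setminus\mathring{\nu}(K))\cup\nu(K)$. The tubular neighborhood $\nu(K)$ is a 2-disk bundle over $S^2$, and $\partial\nu(K)$ is the associated $S^1$-bundle over $S^2$, so $\pi_1(\partial\nu(K))$ is cyclic, generated by a meridian $\mu$ of $K$. Since $\pi_1(\nu(K))=1$ and $\pi_1(Z)=1$, van Kampen's theorem yields $\pi_1(Z)=G/\langle\langle\mu\rangle\rangle$, hence $G$ is normally generated by $\mu$ and $w(G)\leq 1$. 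Similarly, the long exact sequence of the pair $(Z,Z\setminus K)$ combined with the Thom isomorphism $H_2(\nu(K),\partial\nu(K))\cong\mathbb{Z}$ shows that $H_1(G)$ is cyclic, generated by $\mu$, so $\mathrm{rank}(H_1(G))\leq 1$. Combining this with the general inequality $\mathrm{rank}(H_1(G))\leq w(G)$, condition \eqref{obstruction} with $n=1$ reduces to promoting these inequalities to equalities for $G\in\mathcal{G}_Z$, which is the only point where one has to work a little (the inequalities are strict only when $\mu$ itself bounds in $Z\setminus K$, a case which is excluded once we keep genuine 2-knot groups in $\mathcal{G}_Z$).

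The main content is really in Theorem \ref{main theorem}; the corollary is an observation that the obstruction of Proposition \ref{obstruction proposition} is automatic for 2-knots in simply-connected ambient manifolds (where $\mathcal{G}_Z$ parameterizes every possible group), so Theorem \ref{main theorem} applies across the entire family. Consequently, I do not expect a genuine obstacle beyond the bookkeeping outlined above.
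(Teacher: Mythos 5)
Your reduction to Theorem \ref{main theorem} is exactly the paper's route: the corollary is deduced there in one line by asserting that every $G\in\mathcal{G}_Z$ satisfies condition \eqref{obstruction} with $n=1$, and you correctly identify that all the content of the corollary lies in this verification. Your van Kampen argument for $w(G)\leq 1$, the identification of $H_1(G)$ as a cyclic group generated by the meridian, and the inequality $\mathrm{rank}(H_1(G))\leq w(G)$ are all fine.

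However, the step you defer --- promoting these inequalities to equalities --- is a genuine gap, not bookkeeping. What is needed is $H_1(Z\setminus\mathring{\nu}(K))\cong\mathbb{Z}$, and a Mayer--Vietoris computation shows this holds if and only if $K$ is nullhomologous in $Z$: if some class $A\in H_2(Z)$ has $A\cdot[K]=d\neq 0$, then $d\mu=0$ in $H_1(Z\setminus\mathring{\nu}(K))$, so $H_1(G)$ is finite cyclic and condition \eqref{obstruction} fails. Concretely, $K=S^2\times\{pt\}\subset S^2\times S^2$ has simply connected complement, so $\{1\}\in\mathcal{G}_{S^2\times S^2}$, yet Proposition \ref{obstruction proposition} shows the trivial group is not the 2-link group of any nullhomotopic 2-knot; the conclusion of the corollary cannot hold for this $G$. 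Your parenthetical claim that such cases are ``excluded once we keep genuine 2-knot groups in $\mathcal{G}_Z$'' has no basis in the definition of $\mathcal{G}_Z$, which ranges over all 2-knots in $Z$. Your argument (and the statement) become correct once $\mathcal{G}_Z$ is restricted to nullhomologous 2-knots --- automatic in the motivating case $Z=S^4$ and evidently what the surrounding discussion intends --- but the paper's own one-sentence justification is equally silent on this point, so the gap is shared rather than introduced by you.
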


A natural question follows:

\begin{question}\label{question1}
    Is there a 4-manifold $Z$ with 
$$\{ \pi_1(Z\setminus \mathring{\nu} (K)) \vert K\subset Z \text{ a 2-knot} \} = \{ \pi_1(Z\setminus \mathring{\nu} (K)) \vert K\subset Z \text{ an exotic 2-knot} \}.$$
\end{question}

Our construction does not produce such 4-manifold since there are 2-knot groups with arbitrarily large ranks. This affects the Euler characteristic of the ambient manifolds. 
\begin{remark}\label{auckly remark}
    Dave Auckly pointed to us that this question is partially answered by a theorem of his \cite[Theorem 1.5]{auckly2023smoothly}: for any smooth 4-manifold $M$ there exists a non-negative integer $n$ such that any homologically essential 2-knot $K$ in $M\# n(S^2\times S^2)$ is "exotic" in a related sense (cf. \cite[Definition 1.2]{auckly2023smoothly}). This is an affirmative answer for Question \ref{question1} when considering 2-knot groups of homologically essential 2-knots. Such groups are described in \cite[Section 3]{kim2008smooth}.
\end{remark}

\textbf{Acknowledgements:} I would like to thank Rafael Torres and Oliviero Malech for going through an earlier draft of this paper and providing many helpful comments. I would also like to thank them and Valentina Bais for useful conversations that ultimately motivated this paper. I thank Rafael Torres for pointing out Remark \ref{primitve 2-links}, Dave Auckly for pointing out Remark \ref{auckly remark} and both of them for pointing out Remark \ref{auckly-torres remark}. I thank Selman Akbulut for helpful comments. I also thank Allison N. Miller, Sümeyra Sakallı and the anonymous referees for providing many useful comments. This work was supported by the "National Group for Algebraic and Geometric Structures, and their Applications" (GNSAGA - INdAM).

\section{Background on some topological operations}
In this section, we describe surgery on loops and spheres and some of its topological effects. In particular, we prove Proposition \ref{obstruction proposition}. Lemma \ref{surgery group} and Lemma \ref{nullhomologous spheres} will be useful again in Section \ref{last section}.
We begin with some helpful terminology.
\begin{definition}
Let $M$ be a closed 4-manifold.
\begin{itemize}

        \item An $n$-component 1-link is a union of $n$ disjoint smooth simple loops in $M$.
        \item Let $p\in M$ and $\mathcal{L}=l_1 \sqcup l_2 \sqcup \cdots \sqcup l_n$ be a 1-link in $M$. We say that $\mathcal{L}$ is a generator 1-link if $\pi_1(M)$ is normally generated by the family of homotopy classes $\{[l_1'] , [l_2'] ,\cdots , [l_n'] \}$ where $l_i'= m^{-1} l_i m$ and $m$ is a path from $p$ to $l_i$ (up to conjugation, the homotopy class $[l_i']$ is independent of $m$).
\end{itemize}
        
Since $M$ is oriented, $\nu(l_i) \approx S^1 \times D^3$ for each component $l_i$ of $\mathcal{L}$. Therefore, it admits two possible framings.

\begin{itemize}
        \item Surgery on $M$ along a loop $l \subset M$ is the operation of removing a tubular neighborhood $\mathring{\nu}(l) \approx S^1 \times D^3$ and gluing back a $D^2 \times S^2$ to get a manifold $M^*$ i.e.
        $$M^*_{\varphi} := (M\setminus \mathring{\nu}(l) ) \cup_{\varphi} (D^2 \times S^2)$$ 
        where $\varphi: \partial (M\setminus \mathring{\nu}(l) ) \longrightarrow \partial (D^2 \times S^2)  $ is one of the two possible gluing maps (up to isotopy) determined by the framing of $l$. We will omit $\varphi$ from the notation $M_{\varphi}^*$ since, in general, we specify the framing.
        \item The sphere $ \{0\}\times S^2  \subset  D^2 \times S^2 \subset M^*$ is called the belt sphere of the surgery on $l$.
        
        \item Let $\mathcal{L}=l_1 \sqcup l_2 \sqcup ... \sqcup l_n$ be a 1-link in $M$. Define surgery on $M$ along $\mathcal{L}$ to be $n$ loop surgeries on $M$ along $l_1, l_2, ..., l_n$. The $n$ belt spheres of these surgeries are pairwise disjoint. We call their union the belt 2-link of the surgery on $M$ along $\mathcal{L}$.
    \end{itemize}
\end{definition}
The following summarizes the needed effect on the fundamental group.
\begin{lemma}\label{surgery group}
    Let $M$ be a closed 4-manifold and let $\mathcal{L}\subset M$ be a generator 1-link. Let $M^*$ be the result of surgery along $\mathcal{L}$ and $\Gamma$ its belt 2-link. Then $M^*$ is simply-connected and the 2-link group of $\Gamma$ is $\pi_1(M)$.
\end{lemma}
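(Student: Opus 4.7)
The plan is to compute $\pi_1(M^*)$ via van Kampen's theorem applied to the decomposition
$$M^* = \bigl( M \setminus \mathring{\nu}(\mathcal{L}) \bigr) \cup_{\varphi} \bigsqcup_{i=1}^{n} D^2 \times S^2,$$
and then, for the second assertion, to identify the complement of $\Gamma$ in $M^*$ with the complement of $\mathcal{L}$ in $M$.

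First I would handle the outer piece. Since each component $l_i$ of $\mathcal{L}$ has codimension $3$ in $M$, general position forces the inclusion $M\setminus \mathcal{L}\hookrightarrow M$ (equivalently, $M\setminus \mathring{\nu}(\mathcal{L})\hookrightarrow M$) to induce an isomorphism on $\pi_1$. Hence $\pi_1\bigl(M\setminus \mathring{\nu}(\mathcal{L})\bigr)\cong \pi_1(M)$, and I would record that under this isomorphism the loop $\partial D^2 \times \{\mathrm{pt}\}\subset S^1\times S^2 = \partial \nu(l_i)$ corresponds to the conjugacy class $[l_i']$ from Definition of generator 1-link (the $S^1$-factor of $\partial \nu(l_i) = S^1\times S^2$ is isotopic to $l_i$ pushed to the boundary, and choosing a path $m$ from the basepoint to $\partial\nu(l_i)$ conjugates it into $[l_i']$).

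Next I would analyze the gluing piece: $D^2\times S^2$ is simply connected and meets $M\setminus\mathring{\nu}(\mathcal{L})$ along $S^1\times S^2$, which has $\pi_1 \cong \mathbb{Z}$ generated by the $S^1$-factor. Van Kampen's theorem (applied one component at a time, using that the gluing region is connected and the inclusion of each $S^1\times S^2$ into $D^2\times S^2$ kills its $\pi_1$) then yields
$$\pi_1(M^*) \;\cong\; \pi_1(M)\,\big/\,\langle\!\langle\, [l_1'],\ldots,[l_n']\,\rangle\!\rangle.$$
Since $\mathcal{L}$ is a generator 1-link, this quotient is trivial, so $M^*$ is simply connected.

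For the 2-link group of $\Gamma$, I would observe that the belt sphere $\{0\}\times S^2\subset D^2\times S^2$ has normal bundle equal to the entire $D^2\times S^2$ factor, so (up to a collar) $\mathring{\nu}(\Gamma)$ is precisely the union of the glued-in copies of $D^2\times S^2$. Therefore
$$M^*\setminus \mathring{\nu}(\Gamma) \;\simeq\; M\setminus \mathring{\nu}(\mathcal{L}),$$
and the previous codimension-$3$ argument gives $\pi_1\bigl(M^*\setminus\mathring{\nu}(\Gamma)\bigr)\cong \pi_1(M)$, as required. The only subtlety I foresee is the bookkeeping around basepoints and connecting paths (which is exactly what the definition of $[l_i']$ is designed to absorb); everything else is a direct application of van Kampen and the codimension-$3$ principle.
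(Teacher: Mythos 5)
Your proposal is correct and follows essentially the same route as the paper: apply van Kampen to the surgery decomposition (one component at a time) to kill the normal generators $[l_i']$ and conclude simple connectivity, then identify $M^*\setminus\mathring{\nu}(\Gamma)$ with $M\setminus\mathring{\nu}(\mathcal{L})$ and use the codimension-$3$ principle to get the 2-link group. The paper phrases the induction as a sequence of intermediate manifolds $M_0,\dots,M_n$, but the content is identical.
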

\begin{proof}
Let $p\in M$. Assume that $\mathcal{L}$ is composed of loops $l_1,l_2,...,l_n$.  Let $p$ be a point in $M \setminus \mathcal{L}$. Define $M_{i+1}$ to be the result of surgery on $M_{i}$ along the loop $l_{i+1}$, for $0 \leq i \leq n$, with $M_0 = M$. In particular, $M_n = M^*$. Before performing each loop surgery, we can isotope $l_i$ so that $p$ lies on $\partial \nu(l_i)$, we choose the base point of the fundamental groups to always be $p$.

    Applying the Seifert-van Kampen theorem
    $$\pi_1 (M_{i+1}) \cong \frac{\pi_1(M_i \setminus \mathring{\nu}(l_{i+1}) )* \pi_1(D^2\times S^2) )}{\langle [l_{i+1}] \rangle _N} \cong \frac{\pi_1(M_i)}{\langle  [l_{i+1}] \rangle_N}.$$

    We get that 
    $$\pi_1(M^*)\cong\frac{\pi_1(M)}{\langle  [l_{1}],[l_{2}],...,[l_{n}] \rangle_N } \cong \{ 1 \}. $$
    To determine the 2-link group of $\Gamma$ notice that by definition of surgery, $$M^*\setminus \mathring{\nu} (\Gamma) = M \setminus \mathring{\nu} (\mathcal{L}).$$ This implies that $$\pi_1 (M^*\setminus \mathring{\nu}(\Gamma)) = \pi_1(M \setminus  \mathring{\nu} (\mathcal{L})) \cong \pi_1(M) .$$
\end{proof}

If one drops the exoticness from the conclusion of Theorem \ref{main theorem}, the following proposition follows.
\begin{proposition}
    For any finitely presented group $G$, there is a closed simply-connected 4-manifold $M^*$ and a 2-link $\Gamma \subset M^*$ whose 2-link group is $G$. 
\end{proposition}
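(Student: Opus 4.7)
The plan is to reduce the proposition to Lemma \ref{surgery group} by constructing an auxiliary closed 4-manifold $M$ with $\pi_1(M) \cong G$ that contains a generator 1-link with exactly $w(G)$ components, and then performing surgery.

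First I would realize $G$ as the fundamental group of a closed 4-manifold. Fix a finite presentation $G = \langle g_1, \dots, g_k \mid r_1, \dots, r_m \rangle$ and start from $N := \#_k (S^1 \times S^3)$, whose fundamental group is the free group $F_k$ on generators corresponding to the $S^1$-factors. Each relator $r_j$ can be represented by a smoothly embedded loop in $N$; since $\dim N = 4$, general position makes these loops pairwise disjoint and simple. Surgery along these $m$ loops (with either framing) produces a closed 4-manifold $M$ with $\pi_1(M) \cong F_k / \langle\langle r_1, \dots, r_m \rangle\rangle \cong G$, by the same Seifert--van Kampen computation used in the proof of Lemma \ref{surgery group}.

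Next I would produce a generator 1-link in $M$. By definition of the weight, there exist elements $x_1, \dots, x_{w(G)} \in G$ that normally generate $G$. Pick a basepoint $p \in M$ and represent each $x_i$ by a based loop at $p$; after a small generic perturbation (again permitted by $1+1 < 4$), these loops can be arranged to be disjoint smooth simple loops away from $p$, and then the basepoint can be pushed off to obtain disjoint simple loops $l_1, \dots, l_{w(G)}$ in $M$. By construction the conjugacy classes $[l_i']$ (as in the definition of a generator 1-link) equal $x_1, \dots, x_{w(G)}$, so $\mathcal{L} := l_1 \sqcup \cdots \sqcup l_{w(G)}$ is a generator 1-link.

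Finally, let $M^*$ be the result of surgery on $M$ along $\mathcal{L}$ (with any choice of framing for each component) and let $\Gamma$ be the resulting belt 2-link. Lemma \ref{surgery group} immediately gives that $M^*$ is simply-connected and that the 2-link group of $\Gamma$ is $\pi_1(M) \cong G$, proving the proposition. The only potentially delicate step is the general-position argument that lets us represent normal generators by a disjoint collection of simple loops, but this is standard in dimension four and not really an obstacle; the proposition is essentially a direct packaging of Lemma \ref{surgery group}.
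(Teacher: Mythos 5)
Your proposal is correct and follows essentially the same route as the paper: realize $G$ as the fundamental group of a closed $4$-manifold, choose a generator $1$-link, perform surgery along it, and invoke Lemma \ref{surgery group}. The only difference is that you spell out the standard construction of the ambient $4$-manifold (surgery on $\#_k(S^1\times S^3)$ along relator loops), whereas the paper simply cites this well-known fact, and you insist on exactly $w(G)$ components, which is not needed for this proposition.
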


\begin{proof}
    It is a well known fact that every finitely presented group $G$ is the fundamental group of some closed 4-manifold $M^G$ (\cite[Theorem 1.2.33]{gompf19994} for example). Let $\mathcal{L}$ be a generator 1-link in $M^G$, let $M^*$ be the result of surgery on $M^G$ along $\mathcal{L}$ and let $\Gamma \subset M^*$ be the belt 2-link of this surgery. By Lemma \ref{surgery group}, the 2-link group of $\Gamma$ is $\pi_1(M^G)\cong G$.
\end{proof}
The strategy to prove Theorem \ref{main theorem} will be similar at core. However, instead of one manifold $M_G$ we will need a suitable infinite family of pairwise exotic 4-manifolds that stabilizes after a 1-link surgery.

The following lemma shows when does a 1-link surgery produce a nullhomotopic belt 2-link.

\begin{lemma}\label{nullhomologous spheres}
    Let $\mathcal{L}=l_1 \sqcup l_2 \sqcup ... \sqcup l_n$ be an $n$-component generator 1-link in a closed 4-manifold $M$. Let $[l_i] \in H_1(M)$ the homology class of $l_i$. Then surgery on $M$ along $\mathcal{L}$ yields a nullhomotopic belt 2-link if and only if $$\left( \langle [l_1],[l_2],...,[l_n] \rangle = H_1(M) \right) \cong \mathbb{Z}^n .$$ 
\end{lemma}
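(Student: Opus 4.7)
The plan is to reduce the nullhomotopy of $\Gamma$ to a vanishing statement in $H_2(M^*)$, translate that to the vanishing of meridional 2-spheres in the complement $X := M \setminus \mathring{\nu}(\mathcal{L})$, and finally identify the resulting condition with the stated algebraic statement about $H_1(M)$ via Poincar\'e--Lefschetz duality.

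First, since Lemma~\ref{surgery group} tells us $M^*$ is simply-connected, the Hurewicz isomorphism $\pi_2(M^*) \cong H_2(M^*)$ reduces the question to showing that each belt component $S_i$ satisfies $[S_i] = 0$ in $H_2(M^*)$ exactly under the stated hypothesis. To compute $[S_i]$, I would decompose $M^* = X \cup \nu(\Gamma)$ where $\nu(\Gamma) \cong \bigsqcup_i D^2 \times S^2$, and observe that inside each glued-in handle the belt sphere $\{0\}\times S^2$ is isotopic to $\{\ast\}\times S^2 \subset \partial(D^2 \times S^2)$, which the surgery gluing identifies with the meridional 2-sphere $\mu_i^S \subset \partial\nu(l_i) \subset X$. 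A short Mayer--Vietoris calculation, using that $H_2(\partial\nu(\Gamma)) \to H_2(\nu(\Gamma))$ is an isomorphism, then yields that $[S_i] = 0$ in $H_2(M^*)$ if and only if $[\mu_i^S] = 0$ in $H_2(X)$.

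Next, I would analyze $H_2(X)$ through the long exact sequence of the pair $(M, X)$. Excision together with K\"unneth identifies $H_k(M, X) \cong H_k(\nu(\mathcal{L}), \partial\nu(\mathcal{L}))$, which is $\mathbb{Z}^n$ in degrees $3$ and $4$ and vanishes otherwise; the meridional disks $\{\ast\}\times D^3 \subset \nu(l_i)$ provide canonical generators of $H_3(M, X)$, and the connecting map sends them precisely to the classes $\mu_i^S \in H_2(X)$. The relevant portion of the sequence thus reads
\begin{equation*}
    H_3(M) \xrightarrow{\partial_*} H_3(M, X) = \mathbb{Z}^n \to H_2(X) \to H_2(M) \to 0,
\end{equation*}
so all $\mu_i^S$ vanish in $H_2(X)$ iff $\partial_*$ is surjective. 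Poincar\'e duality on $M$ and Poincar\'e--Lefschetz duality on $\nu(\mathcal{L})$ identify $\partial_*$ with the cohomological restriction $H^1(M) \to H^1(\mathcal{L}) = \mathbb{Z}^n$, $\phi \mapsto (\phi([l_i]))_i$. This restriction is surjective precisely when $\{[l_1], \ldots, [l_n]\}$ admits a dual basis in $H_1(M)/\mathrm{tor}$, i.e., spans a free direct summand of rank $n$.

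Combining this with the generator 1-link hypothesis --- which after abelianization forces the $[l_i]$ to generate $H_1(M)$ --- pins $H_1(M)$ down to be torsion-free of rank $n$ with $\mathbb{Z}$-basis $\{[l_i]\}$, giving exactly $\langle [l_1],\ldots,[l_n]\rangle = H_1(M) \cong \mathbb{Z}^n$. The main technical step I expect to require care is the Poincar\'e--Lefschetz identification of $\partial_*$ with the cohomological restriction, tracking orientations and naturality through the inclusion $\nu(\mathcal{L}) \hookrightarrow M$; the residual algebra at the end to rule out torsion in $H_1(M)$ and conclude that $n$ generators of $\mathbb{Z}^n$ form a basis is routine.
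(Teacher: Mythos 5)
Your proof is correct, but it takes a genuinely different route from the paper's. You localize the homology class of each belt sphere, via Mayer--Vietoris for $M^*=X\cup\nu(\Gamma)$ with $X=M\setminus\mathring{\nu}(\mathcal{L})$, to the meridional sphere $\mu_i^S\subset\partial\nu(l_i)$, and then detect its vanishing through the long exact sequence of the pair $(M,X)$ together with the Poincar\'e--Lefschetz identification of $H_3(M)\to H_3(M,X)$ with the restriction $H^1(M)\to H^1(\mathcal{L})$; the stated condition on $H_1(M)$ then falls out of elementary algebra about dual bases and $n$-generated abelian groups. The paper never leaves the intersection form: it records the Euler characteristic count $b_2(M^*)=b_2(M)+2n-2b_1(M)$, pushes a generating family of surfaces for $(H_2(M),Q_M)$ off the $1$-link (and, for the converse, tubes a generating family for $(H_2(M^*),Q_{M^*})$ off the nullhomologous belt spheres), and invokes the fact that a full-rank unimodular sublattice of a unimodular lattice is the whole lattice, so that the belt spheres die by nondegeneracy of $Q_{M^*}$. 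Your route buys a purely homological argument with no geometric general-position or tubing step, and it makes explicit why the $[l_i]$ must form a $\mathbb{Z}$-basis; the paper's route is shorter once the lattice facts are granted and stays within the intersection-form toolkit used elsewhere. Two minor points: the map you label $\partial_*$ in $H_3(M)\to H_3(M,X)$ is the inclusion-induced map, the connecting homomorphism being the next arrow $H_3(M,X)\to H_2(X)$ that sends the meridian disks to the $\mu_i^S$; and the Hurewicz step in the ``if'' direction is where the generator-1-link hypothesis enters, via $\pi_1(M^*)=1$ from Lemma \ref{surgery group}, which you do use correctly.
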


\begin{proof}
Let $M^*$ be the result of surgery along $\mathcal{L}$ and $\Gamma\subset M^*$ the belt 2-link. There is a family $S$ of surfaces in $(M \setminus \mathcal{L})=(M^*\setminus \Gamma)$ which generates the intersection lattice $(H_2(M),Q_M)$. Let $L$ be the unimodular sublattice of $(H_2(M^*),Q_{M^*})$ that $S \subset M^*$ generates; i.e., $L=(\langle S \rangle, Q_M)$.

Loop surgery increases the Euler characteristic by 2. Therefore, we get
    \begin{align}\label{b2 formula}
        b_2(M^*) = b_2(M) +2n-2b_1(M) 
    \end{align}

    First, assume that 
    $$\langle [l_1] , [l_2] ,...,[l_n] \rangle \cong \mathbb{Z}^n \cong H_1(M)$$
    Since $\Gamma$ is away from $S$, it is enough to show that the elements of $S$ span $H_2(M^*)$ to show that the components of $\Gamma$ are nullhomologous. Their nullhomotopy, in the simply-connected manifold $M^*$, will follow by Hurewicz theorem.
    In this case, \eqref{b2 formula} implies that $b_2(M^*)=b_2(M)$. Therefore, the unimodular sublattice $L$ has the same rank as $(H_2(M^*),Q_{M^*})$. We conclude that $L=(H_2(M^*),Q_{M^*})$; i.e., $S$ generates $H_2(M^*)$.

    For the inverse direction, assume that $\Gamma \subset M^*$ is nullhomotopic. Let $S^*$ be a family of surfaces in $M^*$ that generates the intersection lattice $(H_2(M^*),Q_{M^*})$. Each surface in $S^*$ intersects $\Gamma$ algebraically zero times since $\Gamma$ is nullhomologous. By tubing oppositely signed geometric intersections (see for example \cite[Subsection 15.2.1]{behrens2021disc}), we can assume that $S^*$ is in $M^*\setminus \Gamma=M\setminus \mathcal{L}$. The family $S^*$ generates a rank $b_2(M^*)$ unimodular sublattice of $(H_2(M),Q_M)$. However, \eqref{b2 formula} implies that $b_2(M^*) \geq b_2(M)$. We conclude that $b_2(M)=b_2(M^*)$. This implies that $b_1(M)=n$. Since $H_1(M)$ is generated by $n$ elements, it must be $\mathbb{Z}^n$.

\end{proof}

\begin{proof}[Proof of Proposition \ref{obstruction proposition}]
Notice that $\Gamma$ is the belt 2-link of surgery on a 4-manifold $\Tilde{M}$ with $\pi_1(\Tilde{M})=G$ along an $n$-component 1-link $\mathcal{L} \subset \Tilde{M}$ ($\Tilde{M}$ is the result of surgery along $\Gamma$ and $\mathcal{L}$ its belt 1-link). Since, $\mathcal{L}$ is an $n$-component generator 1-link, we also know that $n \geq w(G)$. But $w(G)\geq rank(H_1(G))$. Applying Lemma \ref{nullhomologous spheres}, we get that $rank(H_1(G))=n$. Therefore, $w(G)=rank(H_1(G))=n.$ Notice that this formula implies that, if $G$ is trivial, the 2-link $\Gamma$ has no components. Therefore, a non-empty nullhomotopic 2-link cannot have a trivial 2-link group.
\end{proof}

We describe one other useful operation (cf. \cite[Section 3.3]{gompf19994}).
\begin{definition}
    Let $M$ be a closed 4-manifold and $T\subset M$ a smoothly embedded torus with zero self intersection. Let $\sigma:S^1 \times S^1 \times D^2 \longrightarrow \nu(T) \subset M$ a framing for $T$. A 0-log transform of $M$ along the framed torus $T$ is the manifold
    $$\Tilde{M}=(M\setminus \mathring{\nu}(T)) \cup_{\varphi} S^1 \times S^1 \times D^2$$
    Where the gluing map $$\varphi: \partial (M\setminus \mathring{\nu} (T) ) = \partial \nu(T) \longrightarrow \partial(S^1 \times S^1 \times D^2)=T^2 \times \partial(D^2)$$
    is chosen such that
    $$(\sigma^{-1} \circ \varphi^{-1})_* : H_1(T^3)  \longrightarrow H_1(T^3)  $$ is a map corresponding to the matrix
$    \begin{pmatrix}
1 & 0 & 0\\
0 & 0 & 1 \\
0 & -1 & 0
\end{pmatrix} $ after identifying $H_1(T^3)$ with $\mathbb{Z}^3$.

\end{definition}

\section{A symplectic manifold }\label{gompf section}
In \cite[Theorem 4.1]{gompf1995new}, the author constructs a closed symplectic 4-manifold $X^G$ with fundamental group $G$ for any finitely presented group $G$. In this section we give a brief description of that construction.
Let $G$ be a finitely presented group with the following presentation
\begin{equation}\label{original presentation G}
     G= \langle \alpha_1 , \alpha_2, ...,\alpha_g \vert w_1, w_2,...,w_r \rangle.
\end{equation}

Consider the manifold $T^2\times \Sigma_{g+k}$ where $T^2$ is the torus and $\Sigma_{g+k}$ is a genus $g+k$ closed surface for a certain suitable $k$ (see \cite{gompf1995new} for more details). Its fundamental group has the following presentation
$$\pi_1(T^2 \times \Sigma_{g+k})\cong \langle x \rangle \oplus \langle y \rangle \oplus \langle \alpha_1 , \beta_1, \alpha_2,\beta_2, ...,\alpha_{g+k}, \beta_{g+k} \vert \prod_{i=1}^{g+k} [\alpha_i,\beta_i]  \rangle$$
where $x$ and $y$ are represented by the loops $X\times \{ p\}$ and $Y \times \{p \}$ on $T^2\times \{p\}$ and $\alpha_i$ and $\beta_i$ are represented by the loops (here, the loops are extended by a path to the base point) $\{q\} \times a_i$ and $\{q\} \times b_i$ on $\{q\} \times \Sigma_{g+k}$ for $i=1,...,g$, $p\in \left( \Sigma_{g+k} \setminus \left( \bigcup_i (a_i \cup b_i) \right) \right)$ and $q\in (T^2 \setminus (X\cup Y))$, as in Figure \ref{figure sigma_g}. The goal is to perform cut and paste operations on $T^2 \times \Sigma_{g+k} $ in order to obtain a symplectic manifold with fundamental group $G$.
\begin{figure}[h]
    \centering
\includegraphics[width=0.85\linewidth]{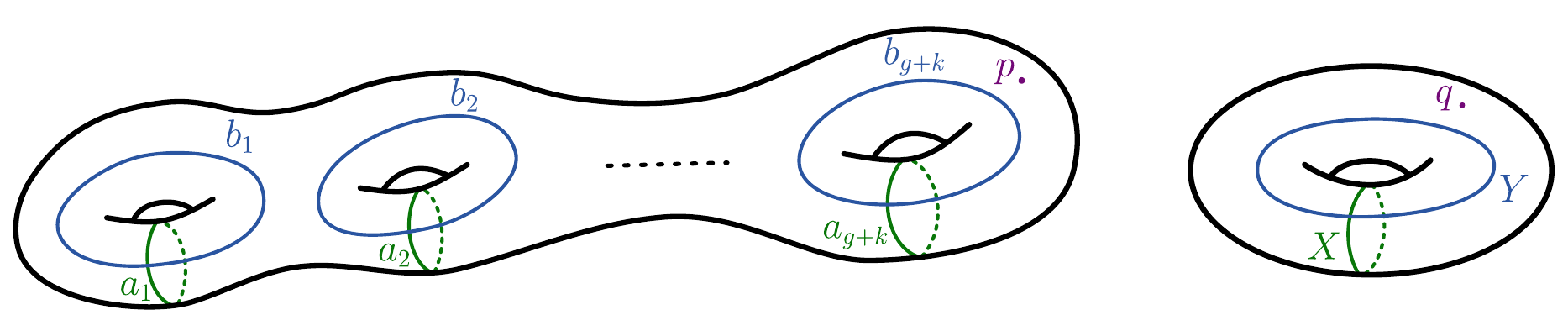}
    \caption{$\Sigma_{g+k}$ and $T^2$}
    \label{figure sigma_g}
\end{figure}

We know that the $K3$ surface contains two disjoint Lagrangian tori $T$ and $T'$ with $$\pi_1 ( K3 \setminus \mathring{\nu}(T \sqcup T') )\cong \{1\}$$ (see, for example, \cite[Section 3.1]{gompf19994}). The two tori are independent in homology. Therefore, \cite[Lemma 1.6]{gompf1995new} implies that, after a perturbation of the symplectic form, both tori are symplectic.
In order to eliminate the extra generators and the relators in the fundamental group, it is possible to symplectically sum multiple copies of the elliptic surface $K3$ along $T$ to $T^2\times \Sigma_{g+k}$  each copy along tori $X\times Y$,  $X\times b_i$  for $i=1,...,g+k$, $X\times a_j$ for $j=g+1,...,g+k$ and $X\times W_l$ for $l=1,...,r$ where $W_l$ are loops representing the words $w_l$ written with the elements $\{ \alpha_1, \alpha_2,...,\alpha_g \} \subset \pi_1(T^2\times \Sigma_{g+k})$. Denote the resulting manifold by $X^G$. Using the Seifert-van Kampen theorem, we get that
$$\pi_1(X^G)\cong \frac{\pi_1(T^2\times \Sigma_{g+k})}{\langle x,y,\beta_1,...,\beta_{g+k}, \alpha_{g+1},...,\alpha_{g+k},w_1,...,w_r\rangle_N} \cong G.$$
\begin{remark}
    
Notice that any copy of $K3$ contains a copy of $T'$ disjoint from $T$. Therefore, $X^G$ contains a copy of $T'$.
\end{remark}

In the rest of the paper, we will use a specific presentation of $G$ to construct $X^G$:
Let $m:=w(G)$ and suppose that $\{ x_1,...,x_m \}$ is a normally generating set for $G$. Notice that
    \begin{align}\label{new presentation}
        G\cong\langle \alpha_1 , \alpha_2,...,\alpha_g , \alpha_{g+1} ,...,\alpha_{g+m} \vert w_1,...,w_r,x_1^{-1}\alpha_{g+1} , ...,x_m^{-1} \alpha_{g+m}\rangle
    \end{align}
This presentation has the nicer property that it makes $G$ normally generated by $m$ of its generators: 
\begin{align*}
    \frac{G}{\langle \alpha_{g+1},...,\alpha_{g+m} \rangle_N} &\cong \frac{\langle \alpha_1 , \alpha_2,...,\alpha_{g+m} \vert w_1,...,w_r,x_1^{-1}\alpha_{g+1} , ...,x_m^{-1} \alpha_{g+m}\rangle}{\langle \alpha_{g+1},...,\alpha_{g+m} \rangle_N} \\
    &\cong \frac{\langle \alpha_1,...,\alpha_g \vert w_1,...,w_r \rangle}{\langle x_1,...,x_m \rangle_N} \\
    &\cong \{1\}.
\end{align*}

We point out a lemma which will be useful in computing fundamental groups after a 0-log transform.
\begin{lemma}\label{pi_1 of 0-log transform}
     Let $T_i := X\times a_i \subset X^G$ be tori equipped with their Lagrangian framing,
     for $i=g+1,...,g+m$. Denote the (based) loop $\{q\} \times a_i \subset X^G \setminus \mathring{\nu}\left( \bigsqcup_{i=g+1}^{g+m} T_i \right) $ by $A_i$ for $i=g+1,...,g+m$. We have that
    $$ \frac{\pi_1 \left( X^G \setminus \mathring{\nu} \left( \bigsqcup_{i=g+1}^{g+m} T_i \right) \right)}{\langle [A_{g+1}] ,...,[A_{g+m}] \rangle_N} \cong \{1 \}$$

\end{lemma}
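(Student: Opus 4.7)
The plan is to prove $\pi_1\bigl(X^G \setminus \mathring{\nu}(\bigsqcup_{i=g+1}^{g+m} T_i)\bigr) \cong G$ through the inclusion into $X^G$, after which the statement of the lemma reduces immediately to the identity $G/\langle \alpha_{g+1}, \ldots, \alpha_{g+m}\rangle_N \cong \{1\}$ displayed just above. By Seifert--van Kampen applied to $X^G = \bigl(X^G \setminus \mathring{\nu}(\bigsqcup T_i)\bigr) \cup \nu(\bigsqcup T_i)$ along the $T^3$-boundaries, the inclusion-induced surjection of $\pi_1$ onto $G = \pi_1(X^G)$ has kernel normally generated by the meridians $\mu_i$ of $T_i$; so it is enough to prove that $\mu_i = 1$ in $\pi_1$ of the complement.

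The heart of the argument is a local identification of $\mu_i$ with a commutator. In $\pi_1(T^2 \times \Sigma_{g+m} \setminus \mathring{\nu}(T_i))$, the meridian $\mu_i$ is conjugate to the commutator $[y, \beta_i]$. The reason is the geometric dual torus $Y \times b_i \subset T^2 \times \Sigma_{g+m}$: since $|X \cap Y| = 1$ and $|a_i \cap b_i| = 1$, it meets $T_i = X \times a_i$ transversely at exactly one point. Removing a small disk around that intersection produces a once-punctured torus embedded in $T^2 \times \Sigma_{g+m} \setminus \mathring{\nu}(T_i)$, with $\pi_1 = \langle y, \beta_i\rangle$ free on two generators and boundary representing both the commutator $[y, \beta_i]$ (as the standard boundary word of a punctured torus) and a parallel copy of $\mu_i$ (as the boundary of a transverse disk to $T_i$).

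Pushing this relation into $\pi_1\bigl(X^G \setminus \mathring{\nu}(\bigsqcup T_i)\bigr)$: Seifert--van Kampen applied to the Gompf symplectic sums, together with $\pi_1(K3 \setminus \mathring{\nu}(T\sqcup T'))=1$ stated in the excerpt, exhibits $\pi_1\bigl(X^G \setminus \mathring{\nu}(\bigsqcup T_i)\bigr)$ as a quotient of $\pi_1(T^2 \times \Sigma_{g+m} \setminus \mathring{\nu}(\bigsqcup T_i))$ in which the classes $x$, $y$, $\beta_1, \ldots, \beta_{g+m}$, and the words $W_l$ become trivial, since these are exactly the elements killed by the sums at $X \times Y$, $X \times b_j$ and $X \times W_l$ (the tori $T_i$ being disjoint from all of them). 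In this quotient $y = \beta_i = 1$, hence $\mu_i = [y, \beta_i] = 1$, as required. Combining with the first paragraph, $\pi_1\bigl(X^G \setminus \mathring{\nu}(\bigsqcup T_i)\bigr) \cong G$ with $[A_i] \leftrightarrow \alpha_i$, and the displayed identity for $G$ finishes the proof.

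The main difficulty is making the identification $\mu_i = [y, \beta_i]$ up to conjugation rigorous in $\pi_1(T^2 \times \Sigma_{g+m} \setminus \mathring{\nu}(T_i))$: one must fix basepoints, choose a path linking the basepoint of $\pi_1$ to the punctured dual torus, and verify that the boundary of the punctured torus indeed represents a parallel copy of the meridian under these choices.
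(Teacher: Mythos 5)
Your proposal is correct and follows essentially the same route as the paper's proof: both reduce the statement to showing that the meridians $\mu_i$ are already trivial in $\pi_1\bigl(X^G\setminus\mathring{\nu}(\bigsqcup T_i)\bigr)$, and both establish this by identifying $\mu_i$ (up to conjugation) with the boundary commutator of the punctured geometric dual torus and then using that $y$ and $\beta_i$ are killed by the fiber sums. The only cosmetic difference is that you first upgrade the complement's fundamental group to $G$ itself before quotienting, whereas the paper works directly with the quotient by the $[A_i]$'s and $[\mu_i]$'s; also note the paper takes the dual torus to be $Y\times b_i'$ for a parallel copy $b_i'$ of $b_i$ so that it stays disjoint from the fiber-sum loci.
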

\begin{proof}
    Since we are using the group presentation \eqref{new presentation}. We know that $$\frac{\pi_1(X^G )}{\langle  [A_{g+1}] ,...,[A_{g+m}] \rangle_N} \cong \frac{G}{\langle \alpha_{g+1} ,...,\alpha_{g+m} \rangle_N} \cong \{1 \}.$$
    By the Seifert-van Kampen theorem, we also know that
    $$\pi_1(X^G) \cong \frac{\pi_1 \left( X^G \setminus \mathring{\nu}\left( \bigsqcup_{i=g+1}^{g+m} T_i \right)\right)}{\langle [\mu_{g+1}] ,...,[\mu_{g+m}] \rangle_N} $$
     where $\mu_i$ is the (based) meridian of the framed torus $T_i$.
    The two formulas imply that
    \begin{align*}
        \{1 \} &\cong \dfrac{\pi_1(X^G )}{\langle[A_{g+1}] ,...,[A_{g+m}] \rangle_N} \\
        &\cong \dfrac{\pi_1 \left( X^G \setminus \mathring{\nu}\left(\bigsqcup_{i=g+1}^{g+m} T_i \right) \right)}{\langle [A_{g+1}] ,...,[A_{g+m}],[\mu_{g+1}] ,...,[\mu_{g+m}] \rangle_N}.
    \end{align*}

    Therefore, to prove the lemma, it is enough to show that 
    $$\textstyle{[\mu_i] =1 \in \pi_1 \left(X^G \setminus \mathring{\nu}\left(\bigsqcup_{i=g+1}^{g+m} T_i \right) \right) \text{, for $i=g+1,...,g+m$}}$$

    \begin{figure}[h]
        \centering
        \includegraphics[scale=0.5]{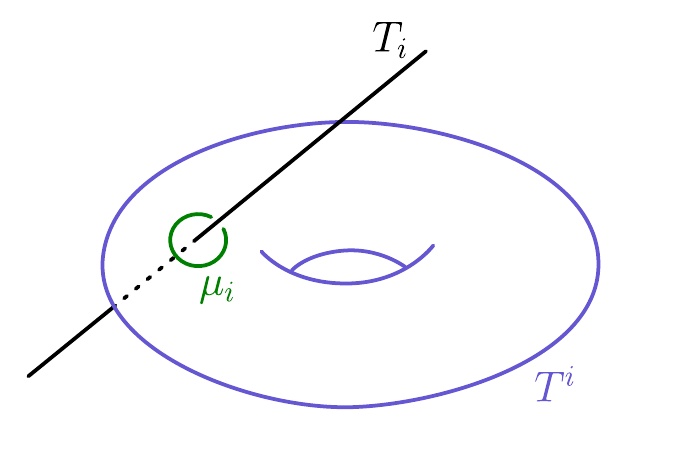}
        \caption{The meridian on the dual torus}
        \label{dual tori}
    \end{figure}
    If $b_i'$ is a parallel loop of $b_i$ in $\Sigma_{g+m+k}$ then the torus $T^i := Y\times b_i' \subset X^G$ is geometrically dual to $T_i$  and disjoint from $T_j$ for $i\neq j \in \{ g+1,...,g+m \}$. This implies that in the complement $X^G \setminus \mathring{\nu}\left(\bigsqcup_{i=g+1}^{g+m} T_i\right) $ the loop $\mu_i$ is homotopic to a (based) loop representing a commutator of the form $[Y^{\pm 1},b_i'^{\pm 1}]$ (a small loop on the punctured dual torus, see Figure \ref{dual tori}). However, $[Y^{\pm 1},b_i'^{\pm 1}]=[cy^{\pm 1}c^{-1},d\beta_i^{\pm 1} d^{-1}]$ with $c$ and $d$ potentially arising due to the basing of the loops in the fundamental group and $y$ and $\beta_i$ the loops discussed at the beginning of this section. But the fiber sums with $K3$ makes $y$ and $\beta_i$ trivial; therefore,
    $$[\mu_i]= [cc^{-1},d d^{-1}] = 1$$
    Which concludes the proof of the lemma.

\end{proof}

\section{Exotic manifolds}\label{park section}
In this section, for any finitely presented group $G$, we will describe an infinite family $$\{ M_k^G \vert k\in \mathbb{N} \} $$ of closed pairwise exotic 4-manifolds, i.e., pairwise non diffeomorphic but homeomorphic manifolds, whose fundamental group is $G$. To construct it, we will follow the core ideas present in \cite{park2007geography}. However, we will not investigate the Euler characteristic and signature of the family, so we can reduce the construction to a simpler version. First, we restate \cite[Proposition 2.2]{park2007geography}.
    
    \begin{proposition}[\cite{park2007geography}]\label{proposition 2.2 park}
        Suppose $Z$ is a symplectic 4-manifold which contains a symplectic torus $T$ lying in a cusp neighborhood. Then, for each integer $m\geq2$, there is a family of symplectic 4-manifolds $$\{ Z\#_T E(m)_K \vert K \text{ is a fibered knot in } S^3 \}$$ which are mutually non-diffeomorphic but all homeomorphic. Furthermore, $$\pi_1(Z\#_T E(m)_K)\cong \pi_1(Z).$$
    \end{proposition}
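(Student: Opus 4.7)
The plan is to build $Z \#_T E(m)_K$ in two stages: first perform Fintushel--Stern knot surgery on $E(m)$ along a regular fiber sitting inside one cusp neighborhood using the fibered knot $K$, then form the Gompf symplectic sum of the result with $Z$ along $T$. Concretely, $E(m)$ contains two disjoint cusp neighborhoods; pick a regular fiber $F_0$ in one of them and replace $\nu(F_0) \cong T^2 \times D^2$ by $S^1 \times (S^3 \setminus \nu(K))$ via the standard knot-surgery gluing to produce $E(m)_K$. Because $K$ is fibered with fiber surface $\Sigma$, the piece $S^1 \times (S^3 \setminus \nu(K))$ is a symplectic $\Sigma$-bundle over a torus, so by Fintushel--Stern's argument $E(m)_K$ inherits a symplectic form. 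A cusp fiber $T_K$ sitting in the second cusp neighborhood is unaffected and remains a symplectic torus with trivial normal bundle. I then form $Z \#_T E(m)_K$ by symplectically summing along $T$ and $T_K$, which is symplectic by Gompf's theorem.

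For the fundamental group, note that $T \subset Z$ and $T_K \subset E(m)_K$ both lie in cusp neighborhoods, so the standard vanishing-cycle argument shows that the complement of each torus inside its cusp neighborhood is simply connected, equivalently that its meridian bounds a disk pushed off the cusp fiber. In particular, inclusion induces isomorphisms $\pi_1(Z \setminus \nu(T)) \cong \pi_1(Z)$ and $\pi_1(E(m)_K \setminus \nu(T_K)) \cong \{1\}$ (using also that $E(m)_K$ is simply connected, inherited from $E(m)$ since knot surgery along a torus with nullhomotopic meridian preserves $\pi_1$). Seifert--van Kampen applied to the $T^3$-decomposition of $Z \#_T E(m)_K$ then collapses to $\pi_1(Z \#_T E(m)_K) \cong \pi_1(Z)$.

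To distinguish the smooth structures I would use Seiberg--Witten invariants. The Fintushel--Stern knot surgery formula gives $SW_{E(m)_K} = SW_{E(m)} \cdot \Delta_K(t)$, where $\Delta_K$ is the symmetrized Alexander polynomial and $t$ corresponds to twice the fiber class. Combined with the Morgan--Mrowka--Szab\'o gluing formula for Seiberg--Witten invariants of a symplectic sum along a torus of square zero, this shows that $SW_{Z \#_T E(m)_K}$ encodes $\Delta_K$ in a way that makes it recoverable from the invariant; the hypothesis $m \geq 2$ is used precisely to ensure that $E(m)$ has a nontrivial canonical SW basic class which, after the gluing, propagates nontrivial information about $\Delta_K$ into the sum. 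Choosing a sequence of fibered knots with pairwise distinct Alexander polynomials, for example the $(2, 2j+1)$-torus knots, therefore yields an infinite family of pairwise non-diffeomorphic symplectic $4$-manifolds.

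The homeomorphism claim follows because both knot surgery and the symplectic sum preserve the intersection form, the Stiefel--Whitney classes, and the fundamental group, and moreover knot surgery in a region with nullhomotopic meridian is topologically a trivial modification, so $E(m)_K$ and $E(m)_{K'}$ are homeomorphic for any fibered knots $K, K'$, and the sums $Z \#_T E(m)_K$ inherit a homeomorphism over $Z \setminus \nu(T)$. The main obstacle is precisely this topological-triviality step in the non-simply-connected setting, where Freedman's classification is not directly available: one must argue the homeomorphism explicitly using that the knot surgery is supported inside a cusp neighborhood of $E(m)_K$ disjoint from the gluing locus of the symplectic sum, so the topological equivalence of $S^1 \times (S^3 \setminus \nu(K))$ with $T^2 \times D^2$ rel the boundary identifications survives forming the sum with $Z$.
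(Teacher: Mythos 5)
The paper does not prove this proposition itself: it is imported from Park's work, and the paper only records the construction (knot surgery on $E(m)$ along a fiber $\mathfrak{T}_1$ in one cusp neighborhood, then fiber sum with $Z$ along $\mathfrak{T}_2$ and $T$). Your construction, symplecticity argument, fundamental group computation via Seifert--van Kampen and the vanishing cycles, and Seiberg--Witten sketch all follow that same route and are essentially in order.

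The genuine problem is in your final paragraph, at precisely the step you flag as the main obstacle. The claim that $S^1\times(S^3\setminus \mathring{\nu}(K))$ is topologically equivalent to $T^2\times D^2$ rel the boundary identifications is false for every knotted $K$: the former has fundamental group $\mathbb{Z}\times\pi_1(S^3\setminus K)$, so it is not even homotopy equivalent to $T^2\times D^2$. Knot surgery is never a topologically trivial \emph{local} modification; it only becomes topologically invisible after the surgered region is absorbed into a simply connected piece. The correct argument is global on the $E(m)$ side: one shows that $E(m)_K\setminus\mathring{\nu}(\mathfrak{T}_2)$ is simply connected (the vanishing cycles of the cusp containing $\mathfrak{T}_1$ kill the image of $\pi_1(S^3\setminus K)$) and carries the same intersection form and boundary data as $E(m)_{K'}\setminus\mathring{\nu}(\mathfrak{T}_2)$, and then invokes Freedman's classification in a relative form (e.g.\ Boyer's results on simply connected $4$-manifolds with prescribed boundary) to obtain a homeomorphism between these complements that restricts to the identity on the $T^3$ boundary; only then can one glue in $Z\setminus\mathring{\nu}(T)$ by the identity to get a homeomorphism of the fiber sums. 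That relative Freedman input is the real content of the homeomorphism claim --- and of item (2) of Proposition \ref{park's theorem}, which the rest of the paper depends on --- and your local replacement argument does not supply it.
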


    The family in Proposition \ref{proposition 2.2 park} is constructed, first, by considering the elliptic surface $E(m)$ for $m\geq 2$ which contains two smoothly embedded tori $\mathfrak{T}_1$ and $\mathfrak{T}_2$ such that $$\pi_1(E(m) \setminus \mathring{\nu} (\mathfrak{T}_1 \sqcup \mathfrak{T}_2)) = \{ 1 \}.$$    
    
    Along $\mathfrak{T}_1$ one performs Fintushel-Stern knot surgeries \cite{Fintushel1996KnotsLA}, using a certain infinite family of knots $\mathcal{K}$, to obtain an infinite family $\{ E(m)_K \vert  K \in \mathcal{K} \}$ of simply-connected 4-manifolds that are pairwise homeomorphic but non-diffeomorphic.
    Next, one fiber sums $E(m)_K$ and the manifold $Z$ along tori $\mathfrak{T}_2 \subset E(m)_K$ and $T \subset Z$. This yields the manifold $Z\#_T E(m)_K$.

The 4-manifold $X^G$ is symplectic and contains the symplectic near-cusp embedded torus $T'$; therefore, we can apply Proposition \ref{proposition 2.2 park} for $Z=X^G$ with $m=3$ to obtain an infinite family of closed pairwise exotic manifolds. We denote this family by 
\begin{align}\label{family of exotic manifolds}
    \{ M_k^G \vert k\in \mathbb{N} \}
\end{align}

In the next proposition we mention a couple of useful observations that follow from the proof of Proposition \ref{proposition 2.2 park} found in \cite{park2007geography}.
    \begin{proposition}[\cite{park2007geography}] \label{park's theorem}
    The manifolds in the family \eqref{family of exotic manifolds}
        satisfy the following
        \begin{enumerate}
           
            \item\label{inclusion of pi1} the inclusion $i: X^G \setminus \mathring{\nu}(T') \hookrightarrow M_k^G $ induces a surjective map $$i_* : \pi_1  (X^G \setminus \mathring{\nu}(T')) \longrightarrow \pi_1(M_k^G).$$
            \item for any positive integer $k$, there exists a homeomorphism $f_k:M_k^G \longrightarrow M_1^G$ that restricts to the identity on $X \setminus \mathring{\nu}(T')$. 
        \end{enumerate}
    \end{proposition}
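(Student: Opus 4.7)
The plan is to read off both conclusions from the fiber--sum decomposition
$$M_k^G = (X^G \setminus \mathring{\nu}(T')) \cup_\varphi (E(3)_{K_k} \setminus \mathring{\nu}(\mathfrak{T}_2))$$
built into Park's construction, together with the fact that the Fintushel--Stern knot surgery that produces $E(3)_K$ from $E(3)$ is supported inside $\nu(\mathfrak{T}_1)$, hence disjoint from $\mathfrak{T}_2$ and from the gluing region.

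For (\ref{inclusion of pi1}), I would first establish that $\pi_1(E(3)_{K_k} \setminus \mathring{\nu}(\mathfrak{T}_2)) = \{1\}$. Writing this piece as $(E(3) \setminus \mathring{\nu}(\mathfrak{T}_1 \sqcup \mathfrak{T}_2)) \cup_{T^3} (S^3 \setminus \nu(K_k)) \times S^1$ and applying Seifert--van Kampen, the standing hypothesis that the first factor is simply connected presents the group as a quotient of $\pi_1(S^3 \setminus K_k) \times \mathbb{Z}$ by the normal closure of the image of $\pi_1(T^3)$; the Fintushel--Stern gluing identifies the meridian of $K_k$ with a loop in $\partial \nu(\mathfrak{T}_1)$ that is already trivial in the first factor, so the quotient kills both $\pi_1(S^3 \setminus K_k)$ (since it is normally generated by that meridian) and the extra $\mathbb{Z}$-factor coming from the product with $S^1$. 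A second van Kampen applied to $M_k^G$, with this now trivial right-hand piece, then expresses $\pi_1(M_k^G)$ as a quotient of $\pi_1(X^G \setminus \mathring{\nu}(T'))$, which is exactly the surjectivity of $i_*$.

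For the second statement, I would construct $f_k$ as the identity on $X^G \setminus \mathring{\nu}(T')$ glued with a homeomorphism
$$g : E(3)_{K_k} \setminus \mathring{\nu}(\mathfrak{T}_2) \longrightarrow E(3)_{K_1} \setminus \mathring{\nu}(\mathfrak{T}_2)$$
that is the identity on a collar of the gluing boundary $T^3 = \partial \nu(\mathfrak{T}_2)$. Because the knot surgery is confined to $\nu(\mathfrak{T}_1)$, one may already take $g$ to be the identity on $E(3) \setminus \mathring{\nu}(\mathfrak{T}_1 \sqcup \mathfrak{T}_2)$, so the task reduces to producing a rel-boundary homeomorphism between the two surgery pieces $(S^3 \setminus \nu(K_k)) \times S^1$ and $(S^3 \setminus \nu(K_1)) \times S^1$. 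By part (\ref{inclusion of pi1}) and the invariance of the intersection form under Fintushel--Stern surgery, both pieces close up to simply connected 4-manifolds with common intersection form, so a rel-boundary application of Freedman's topological classification, as carried out in Park's proof, supplies $g$; extending by the identity then yields $f_k$.

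The main obstacle is in item (2): one must confirm that $g$ is truly the identity on a collar of $T^3$, not merely a homeomorphism of pairs, and that the Lagrangian framing of $T'$ used in the fiber sum is respected so that the extension by the identity really glues to a homeomorphism of $M_k^G$ with $M_1^G$. Rather than redoing the rel-boundary topological argument, I would carefully verify that these boundary-identity features are precisely what is produced by \cite{park2007geography}; the $\pi_1$ calculation in (\ref{inclusion of pi1}) is by contrast a direct van Kampen exercise.
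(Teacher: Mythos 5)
The paper itself offers no proof of Proposition \ref{park's theorem}: it records both items as observations extracted from the proof of Proposition \ref{proposition 2.2 park} in \cite{park2007geography}. Your argument for item \ref{inclusion of pi1} is correct and is the standard one: decomposing $E(3)_{K_k}\setminus\mathring{\nu}(\mathfrak{T}_2)$ as $(E(3)\setminus\mathring{\nu}(\mathfrak{T}_1\sqcup\mathfrak{T}_2))\cup_{T^3}\left((S^3\setminus\nu(K_k))\times S^1\right)$, using that the first piece is simply connected and that $\pi_1(S^3\setminus K_k)$ is normally generated by the meridian, one gets $\pi_1(E(3)_{K_k}\setminus\mathring{\nu}(\mathfrak{T}_2))=\{1\}$, and a second application of Seifert--van Kampen to $M_k^G$ gives the surjectivity of $i_*$.

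Your treatment of item (2), however, contains a genuine error. You reduce the problem to producing a rel-boundary homeomorphism between the knot-surgery pieces $(S^3\setminus\nu(K_k))\times S^1$ and $(S^3\setminus\nu(K_1))\times S^1$, after declaring $g$ to be the identity on $E(3)\setminus\mathring{\nu}(\mathfrak{T}_1\sqcup\mathfrak{T}_2)$. No such homeomorphism of the surgery pieces exists in general: their fundamental groups are $\pi_1(S^3\setminus K_k)\times\mathbb{Z}$ and $\pi_1(S^3\setminus K_1)\times\mathbb{Z}$, which differ whenever the knot groups differ, as they do for distinct knots in the family $\mathcal{K}$. For the same reason one cannot require $g$ to restrict to the identity on $E(3)\setminus\mathring{\nu}(\mathfrak{T}_1\sqcup\mathfrak{T}_2)$; the sentence ``both pieces close up to simply connected 4-manifolds'' conflates the non-simply-connected surgery pieces with the complements that actually are simply connected. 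The correct route, which is the one in \cite{park2007geography} going back to Fintushel--Stern, is to apply Freedman's classification (in its version for simply connected $4$-manifolds with boundary, rel the $T^3$ boundary) to the \emph{entire} complements $E(3)_{K_k}\setminus\mathring{\nu}(\mathfrak{T}_2)$ and $E(3)_{K_1}\setminus\mathring{\nu}(\mathfrak{T}_2)$: by your own computation in item \ref{inclusion of pi1} these are simply connected, and they have isomorphic intersection forms and identical boundary data, hence are homeomorphic rel boundary. The resulting homeomorphism is global on that complement and is in no sense the identity away from $\nu(\mathfrak{T}_1)$; it is controlled only near $\partial\nu(\mathfrak{T}_2)$, which is exactly what is needed to glue it to the identity on $X^G\setminus\mathring{\nu}(T')$ and obtain $f_k$.
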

    \begin{remark}\label{non-spin remark}
        We can assume that $\mathfrak{T}_1 \subset E(m)$ is geometrically dual to a sphere $S$ of self intersection $m$ which is disjoint from $\mathfrak{T}_2 \subset E(m)$. Hence, $X\#_T E(m)$ contains the sphere $S$. If $m$ is odd then $X\#_T E(m)$ is non-spin and, therefore, so is $X\#_T E(m)_K$. Since $m=3$ for the family \eqref{family of exotic manifolds}, its elements are non-spin.
    \end{remark}

\section{Proof of the main theorem}\label{last section}

In this section, we will prove Theorem \ref{main theorem} but first, we start with the following fact.

\begin{lemma}\label{stabilizator}
    Let $K$ be a knot in $S^3$ and $Y_K$ the result of a 0-Dehn surgery along $K$. Let $m \subset( S^3 \setminus \nu(K)) \subset Y_K$ be a meridian of $K$ equipped with the standard 0-framing. Perform two loop surgeries on $S^1 \times Y_K$ along the framed loops $\{p\}\times m$ and $S^1 \times \{p'\}$ to obtain a manifold $B$, where $p\in S^1$ and $p' \in Y_K$. Then $B$ is diffeomorphic to either $S^2\times S^2$ or $\mathbb{CP}^2 \# \overline{\mathbb{CP}^2}$. 
\end{lemma}

\begin{proof}
    Using the Moishezon trick \cite[Lemma 13]{moishezon1977complex}, explained in \cite[Lemma 3]{gompf1991sums} (cf. \cite[section 3.1]{bais2023recipe} or \cite{torres2020exoticknots}), we will break down the surgery along $m$ to two steps. The first step is a 0-log transform and the second is a loop surgery, Figure \ref{moishezon trick} depicts the local handlebody diagram of the operation.

\begin{figure}[h]
    \centering
    \includegraphics[scale=0.21]{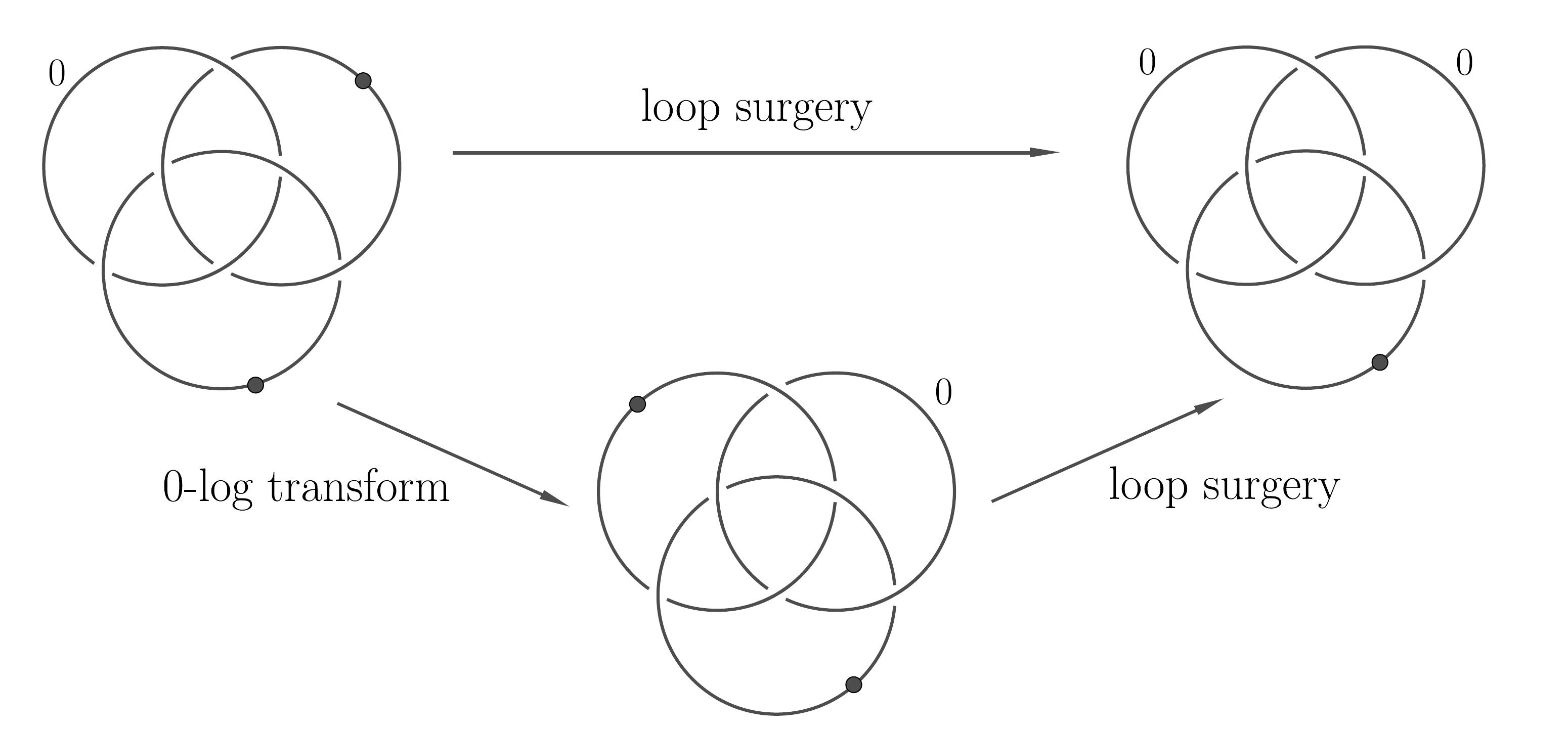}
    \caption{Splitting a loop surgery on $T^2 \times D^2$}
    \label{moishezon trick}
\end{figure}
 The 0-log transform is performed along the torus $S^1 \times m$ equipped with the 0-framing of $m$ in $ (S^3\setminus \nu(K)) \subset Y_K$. On the other hand, since $Y_K$ is the result of a 0-Dehn surgery on $S^3$, $S^1 \times Y_K$ is the result of a 0-log transform on $S^1 \times S^3$ along the torus $S^1\times K$ equipped with the 0-framing of $K$ in $ S^3$. Therefore, performing another 0-log transform on $S^1\times Y_K$ along the framed torus $S^1\times m$ is equivalent to trivially reversing the earlier torus surgery done along $S^1\times K \subset S^1\times S^3$. Hence, by the Moishezon trick, the result of loop surgery on $S^1\times Y_K$ along $m$ is diffeomorphic to the result of a certain loop surgery on $S^1 \times S^3$. However, to obtain $B$, we still need to perform a loop surgery along $S^1\times \{ p'\}$ which yields $S^4$ when performed on $S^1 \times S^3$. Thus, the result of the two loop surgeries on $S^1\times Y_K$ along $\{ p \} \times m$ and $S^1\times \{p'\}$ is the same as the result of a loop surgery on $S^4$ i.e. either $S^2\times S^2$ or $\mathbb{CP}^2 \# \overline{\mathbb{CP}^2}$. 
\end{proof}

Now, in order to prove Theorem \ref{main theorem}, let $G$ be a finitely presented group. 
We construct $X^G$ according to the presentation \eqref{new presentation}. Let 
$$\mathcal{L}=(\{q\} \times a_{g+1}) \sqcup  (\{q\} \times a_{g+2}) \sqcup ... \sqcup( \{q\} \times  a_{g+m})$$ 
be a generator 1-link in $X^G$ where $\{q\} \times a_i$ is a loop representing $\alpha_i \in \pi_1(X^G)$ as in Figure \ref{figure sigma_g}; by item \ref{inclusion of pi1} of Proposition \ref{park's theorem}, its inclusion in $M_k^G$, denoted $\mathcal{L}_k$, is also a generator 1-link in $M_k^G$. Surgery on $\mathcal{L}_k$ yields a simply-connected manifold $M_k^*$ and a belt 2-link $\Gamma_k \subset M_k^*$ whose 2-link group is $G$ (by Lemma \ref{surgery group}).

The first step of the proof is to show that this surgery stabilizes the family \eqref{family of exotic manifolds}.
\begin{lemma}\label{ambient manifold}
    The diffeomorphism type of the manifold $M_k^*$ is independent of $k$; i.e., for all $k\in\mathbb{N}$, there exists a diffeomorphism $\varphi_k:M_k^* \longrightarrow M_1^*$
\end{lemma}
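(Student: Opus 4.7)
The plan is to combine the fact that $\mathcal{L}_k$ sits inside a sub-region common to all the $M_k^G$ with a stabilization result that dissolves the Fintushel--Stern knot surgery used to build the family \eqref{family of exotic manifolds}.

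First, by construction the loops $\{q\}\times a_i$ making up $\mathcal{L}$ all lie in $X^G \setminus \mathring{\nu}(T')$, and by Proposition \ref{park's theorem}\eqref{inclusion of pi1}'s companion item (the second item of Proposition \ref{park's theorem}) the homeomorphism $f_k:M_k^G \to M_1^G$ is the identity on this region; hence $f_k(\mathcal{L}_k)=\mathcal{L}_1$ and $f_k$ descends to a homeomorphism $M_k^* \to M_1^*$. What remains is to upgrade this homeomorphism to a diffeomorphism.

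Second, I would exhibit a smooth $S^2\times S^2$ (or $S^2\tilde{\times} S^2$) summand in $M_k^*$ produced by the loop surgery. The $m$ belt spheres $S_1,\dots,S_m$ are smoothly embedded 2-spheres of self-intersection zero, and each carries a candidate dual disk coming from the attached $D^2\times S^2$ whose boundary is a parallel of $l_i$ in $\partial\nu(l_i)$. Using the geometrically dual tori $T^i=Y\times b_i'$ of Lemma \ref{pi_1 of 0-log transform}, together with Lemma \ref{surgery group} (which makes $M_k^*$ simply-connected and hence every such parallel nullhomotopic), the candidate disk can be completed across the surgered $X^G$-region to an embedded 2-sphere dual to some $S_i$, producing a smooth splitting
\begin{equation*}
M_k^* \,\cong\, N_k \,\#\, S^2\times S^2
\end{equation*}
in which $N_k$ depends on $k$ only through the Fintushel--Stern knot surgery on $\mathfrak{T}_1 \subset E(3)$, which occurred disjointly from the region where the loop surgeries took place.

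Third, the Fintushel--Stern family is known to become diffeomorphism-trivial after a single $S^2\times S^2$ stabilization (Auckly \cite{auckly2003families}, cf.\ also \cite{aucklykimmelvrub2015stable}). Applied to the $\{N_k\}$ this gives $N_k\#S^2\times S^2\cong N_1\#S^2\times S^2$, whence $M_k^*\cong M_1^*$ as smooth manifolds, as desired.

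The main obstacle is the second step: producing the smoothly embedded dual sphere. The candidate dual disk in the attached $D^2\times S^2$ must be completed by a smoothly embedded disk in the complement, which is only automatically immersed from simple-connectivity; controlling the geometric intersections requires either exploiting the dual torus structure of Lemma \ref{pi_1 of 0-log transform} (together with the fact that loops like $\beta_i$ have been trivialized by the $E(1)$ fiber sums in the Gompf construction) or ordering and framing the $m$ individual loop surgeries so that at least one is performed on a loop already nullhomotopic in the partially surgered manifold. Once the smooth $S^2\times S^2$ splitting is established, the invocation of the stabilization theorem is routine.
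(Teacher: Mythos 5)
Your overall strategy --- extract $S^2\times S^2$ summands from the loop surgeries and then invoke a one-stabilization dissolving theorem for the knot-surgered $E(3)$ --- is the same as the paper's, and your first and third steps are essentially fine (the paper uses \cite{baykur2018dissolving} and \cite{mandelbaum1979decomposing} in a relative form, keeping the diffeomorphism fixed on $\partial\nu(\mathfrak{T}_2)$ so that it glues across the fiber sum; you would need the same care). But your second step, which you correctly identify as the main obstacle, is a genuine gap, and the direct route you sketch does not go through. The candidate dual disk $D^2\times\{pt\}$ has boundary a framed parallel of $l_i$, and to obtain a sphere dual to $S_i$ and disjoint from the other belt spheres you must cap it with an embedded disk in $M_k^*\setminus\mathring{\nu}(\Gamma_k)=M_k^G\setminus\mathring{\nu}(\mathcal{L}_k)$. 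By Lemma \ref{surgery group} this complement has fundamental group $G$, and the class of the parallel of $l_i$ is $\alpha_{g+i}$, which is in general nontrivial in $G$ (the $\alpha_{g+i}$ only \emph{normally} generate); so the capping disk does not exist even homotopically in the complement. Your fallback of reordering the surgeries fails for the same reason: after killing $\alpha_{g+1},\dots,\alpha_{g+i-1}$ normally, the class of $\alpha_{g+i}$ in the quotient need not vanish until all $m$ loops are surgered. And even where a nullhomotopy exists, promoting an immersed disk to an embedded one with the right framing is exactly the kind of step that fails in the smooth category; the dual tori $T^i=Y\times b_i'$ are geometric duals of the \emph{tori} $T_i$, not of the belt spheres, and do not supply the needed disk.

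The paper's way around this is the missing idea: since each $l_i$ lies on the embedded torus $T_i=X'\times a_i$, the Moishezon trick factors each loop surgery as a $0$-log transform along $T_i$ followed by a surgery on a residual loop. Lemma \ref{pi_1 of 0-log transform} shows the $0$-log-transformed manifold $\Tilde{M_k^G}$ is simply connected, so the residual loops are nullhomotopic in the \emph{ambient} manifold, hence isotopic into disjoint balls, and surgery on them yields $\Tilde{M_k^G}\#\,m(S^2\times S^2)$ (non-spinness removes the $S^2\tilde{\times}S^2$ ambiguity). This produces the stabilizations without ever constructing embedded dual spheres by hand, after which the dissolving argument proceeds as in your third step.
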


\begin{proof}
    Let $X'$ be a parallel of $X$ in $T^2$ containing $q$. Since each component of the 1-link $\mathcal{L}$ lies on a torus $T_i=X' \times a_i$,  we will break down the loop surgery using the Moishezon trick (see the proof of Lemma \ref{stabilizator} and Figure \ref{moishezon trick}).   
    
    The 0-log transform is performed on the framed torus $T_i$ for $i=g+1,...,g+m$. Call the resulting manifold $\Tilde{M_k^G}$. By the Seifert-van Kampen theorem and Lemma \ref{pi_1 of 0-log transform}, we have: 
    \begin{align*}
        \pi_1(\Tilde{M_k^G}) &\cong \frac{\pi_1\left(M_k^G\setminus \mathring{\nu} \left(\bigsqcup_{i=g+1}^{g+m} T_i \right)\right)}{\langle [a_1'], ...,[a_g'] \rangle_N}  && \text{ by the Seifert-van Kampen theorem} \\
        &\cong \frac{\pi_1\left(E(3)_k \#_{T^2} \left( X^G\setminus \mathring{\nu} \left(\bigsqcup_{i=g+1}^{g+m} T_i \right)\right)\right)}{\langle [a_1'], ...,[a_g'] \rangle_N} \\
        &\cong \frac{\pi_1\left( X^G\setminus \mathring{\nu}\left(\bigsqcup_{i=g+1}^{g+m} T_i \right)\right)}{\langle [a_1'], ...,[a_g'] \rangle_N} && \text{ by the Seifert-van Kampen theorem}\\ 
        &\cong \{ 1\} && \text{ by Lemma \ref{pi_1 of 0-log transform}}
    \end{align*}
    
    The second step states that the manifold $M_k^*$ is diffeomorphic to the result of surgery on $\Tilde{M_k^G}$ along an $m$-component 1-link $\mathcal{L}^*$. However, since $\Tilde{M_k^G}$ is simply-connected and non-spin, the result of surgery on the nullhomotopic 1-link is $\Tilde{M_k^G} \# m(S^2\times S^2)$. We have 
    $$M_k^* \approx \Tilde{M_k^G} \#m(S^2\times S^2) \approx \left(E(3)_k\# m(S^2\times S^2) \right) \#_{T^2}  \Tilde{X^G} $$ 
    where $\Tilde{X^G}$ is the result of 0-log transform on $X^G$ along the tori $T_i$ for $i=g+1,...,g+m$. On the other hand, $E(3)_k$ is obtained as a fiber sum between $E(3)$ and $S^1 \times Y_K$, along the tori $\mathfrak{T}_1 \subset E(3)$ and $S^1\times m$. Cobordism arguments in \cite{mandelbaum1979decomposing}, in particular, the lemma stated in \cite[Lemma 4]{gompf1991sums} (cf. \cite{akbulut2002variations}, \cite{auckly2003families} and \cite{baykur2018dissolving} for similar stabilization results) shows that there is a diffeomorphism
    $$h:\left(\left(E(3)_k \# m(S^2 \times S^2) \right)\setminus \mathring{\nu}(\mathfrak{T}_2)  \right) \longrightarrow \left( \left( E(3)\# Y' \# (m-1)(S^2\times S^2) \right) \setminus \mathring{\nu}(\mathfrak{T}_2) \right)$$
     where $Y'$ is the result of two loop surgeries on $S^1 \times Y_K$ along $\{p\}\times m$ and $S^1 \times \{p'\}$. Since $E(3)$ contains an embedded sphere with odd self intersection (Remark \ref{non-spin remark}), we can alter the surgery framing on $\{p\}\times m$ to match the one described in Lemma \ref{stabilizator}; this is done by dragging a small segment of the loop around the sphere and bringing it back to its initial position with a different framing (see \cite[Proposition 5.2.4]{gompf19994}). So, by Lemma \ref{stabilizator}, $Y'$ is either $S^2\times S^2$ or $\mathbb{CP}^2 \# \overline{\mathbb{CP}^2}$ and since $E(3)$ and $E(3)_k$ are non-spin (Remark \ref{non-spin remark}, we can assume that $E(3) \# Y'$ is diffeomorphic to $E(3) \# (S^2 \times S^2)$. Moreover, \cite[Lemma 4]{gompf1991sums} states that $h$ restricts to the identity on the boundary $\partial \nu  (\mathfrak{T}_2)$.  This implies that the smooth equivalence extends after the gluing to yield
    $$M_k^* \approx \left(E(3)_k\# m(S^2\times S^2)\right) \#_{T^2}  \Tilde{X^G} \approx   \left(E(3)_1\# m(S^2\times S^2)\right) \#_{T^2}  \Tilde{X^G} \approx M_1^* $$
    
    Denote by $\varphi$ the diffeomorphism $M_k^* \longrightarrow M_1^*$. This concludes the proof.
\end{proof} 

The second step of the proof is to address the exoticness of the belt 2-links $\Gamma_k$
\begin{proposition}\label{exotic links construction}
    Recall that $\Gamma_k \subset M_k^*$ is the belt 2-link of the surgery on $M_k^G$ along the generator 1-link $\mathcal{L}_k \subset M_k^G$. The 2-link $\Gamma_1 \subset M_1^*$ is exotic.
\end{proposition}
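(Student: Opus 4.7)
The plan is to define, for each $k \in \mathbb{N}^*$, a 2-link $\Gamma_k' := \varphi_k(\Gamma_k) \subset M_1^*$ using the diffeomorphism $\varphi_k$ of Lemma \ref{ambient manifold}, with $\varphi_1 = \mathrm{id}$ so that $\Gamma_1' = \Gamma_1$, and then to exhibit $\{\Gamma_k'\}_{k \in \mathbb{N}^*}$ as a family of 2-links in $M_1^*$ that are pairwise topologically isotopic but pairwise smoothly inequivalent, witnessing the exoticness of $\Gamma_1$ per Definition \ref{links and exotic links definition}.

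For the topological isotopy, I first observe that $\mathcal{L}$ lies inside $X^G \setminus \mathring{\nu}(T')$, the region on which the homeomorphism $f_k: M_k^G \to M_1^G$ from the second part of Proposition \ref{park's theorem} restricts to the identity. Consequently $f_k$ is the identity on a tubular neighborhood of $\mathcal{L}$ and extends, by the identity on the $m$ glued copies of $D^2 \times S^2$, to a homeomorphism $\tilde{f}_k: M_k^* \to M_1^*$ sending $\Gamma_k$ to $\Gamma_1$. The self-homeomorphism $H_k := \tilde{f}_k \circ \varphi_k^{-1}$ of $M_1^*$ then sends $\Gamma_k'$ to $\Gamma_1$, establishing topological equivalence. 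To promote this to an ambient topological isotopy I would invoke the Perron--Quinn theorem: orientation-preserving self-homeomorphisms of a closed simply-connected 4-manifold are topologically isotopic if and only if they induce the same automorphism of $H_2$. After pre-composing $\tilde{f}_k$ with a self-homeomorphism of $M_k^*$ that fixes $\Gamma_k$ setwise and realizes a chosen isometry of the intersection form (available by Freedman--Quinn realization, using the $S^2\times S^2$ summand of $M_1^*$ highlighted in Remark \ref{auckly-torres remark}), one can arrange $H_k$ to act trivially on $H_2(M_1^*)$ and hence to be topologically isotopic to the identity.

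For smooth inequivalence, argue by contradiction: assume there is a diffeomorphism $\psi:M_1^* \to M_1^*$ with $\psi(\Gamma_i') = \Gamma_j'$ for some distinct $i,j$. Conjugation by $\varphi_i$ and $\varphi_j$ produces a diffeomorphism $\Psi := \varphi_j^{-1}\circ \psi \circ \varphi_i: M_i^* \to M_j^*$ with $\Psi(\Gamma_i) = \Gamma_j$. Since surgery along a 1-link and surgery along its belt 2-link are mutually inverse operations (exchanging the pieces $S^1 \times D^3$ and $D^2 \times S^2$), $\Psi$ restricts to a diffeomorphism of link complements $M_i^G \setminus \mathring{\nu}(\mathcal{L}_i) \to M_j^G \setminus \mathring{\nu}(\mathcal{L}_j)$ which, respecting the canonical trivial normal bundle of the belt 2-link, extends across the reinserted $S^1 \times D^3$ tubular neighborhoods to a global diffeomorphism $M_i^G \to M_j^G$. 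This contradicts the pairwise non-diffeomorphism of the exotic family \eqref{family of exotic manifolds}.

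The main obstacle is the topological isotopy step: controlling the action of $H_k$ on $H_2(M_1^*)$ finely enough to apply Perron--Quinn and, crucially, arranging that the corrective homeomorphism fix $\Gamma_k$ setwise rather than merely permuting its components. The smooth inequivalence step is conceptually clean via loop/belt-sphere duality, but some care with framings is required to ensure that the complement diffeomorphism extends across the reglued $S^1 \times D^3$ tubular neighborhoods of the 1-links.
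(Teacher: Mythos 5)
Your smooth-inequivalence argument is essentially the paper's: a diffeomorphism of pairs $(M_i^*,\Gamma_i)\to(M_j^*,\Gamma_j)$ induces, after reversing the surgery (using that a $2$-sphere with trivial normal bundle in a $4$-manifold has a unique framing, since $\pi_2(SO(2))=0$), a diffeomorphism $M_i^G\to M_j^G$, contradicting Proposition \ref{proposition 2.2 park}. That part is fine.

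The gap is exactly where you suspect it: the topological isotopy step. You ask for a corrective self-homeomorphism of $M_k^*$ that simultaneously realizes a prescribed isometry of the intersection form \emph{and} fixes $\Gamma_k$ setwise, so that $H_k$ still carries $\Gamma_k'$ to $\Gamma_1$. Freedman--Quinn realization gives you no control over a chosen embedded $2$-link, and there is no obvious way to arrange this; as stated, the step does not go through. The paper avoids the problem entirely by \emph{not} insisting that the representative links in $M_1^*$ be $\varphi_k(\Gamma_k)$. It applies Wall's theorem \cite[Theorem 2]{wall1964diffeomorphisms} (using that $M_1^*$ is non-spin of the form $N\#(S^2\times S^2)$) to produce a self-\emph{diffeomorphism} $\psi_k$ of $M_1^*$, unconstrained on the link, with $(\tilde f_k\circ\varphi_k^{-1}\circ\psi_k)_*=\mathrm{id}$ on $H_2$, and then replaces your family $\{\varphi_k(\Gamma_k)\}$ by $\{\psi_k^{-1}\circ\varphi_k(\Gamma_k)\}$. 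Perron--Quinn then makes $\tilde f_k\circ\varphi_k^{-1}\circ\psi_k$ topologically isotopic to the identity, giving the pairwise topological isotopy for the modified family; and because $\psi_k$ is smooth, the modified links are smoothly equivalent to the old ones, so the smooth-inequivalence argument is unaffected. Note that smoothness of the corrective map is essential here: a merely topological correction that moves the link would destroy the smooth-inequivalence conclusion, which is why Wall's diffeomorphism (rather than a Freedman--Quinn homeomorphism) is the right tool. With this one adjustment your argument becomes the paper's proof.
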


\begin{proof}
    By Proposition \ref{park's theorem}, there exists a homeomorphism $$f_k : M_k^G\longrightarrow M_1^G$$
    which restricts to the identity on $X^G \setminus \mathring{\nu}(T')$. Therefore, it sends the framed neighborhood of the generator 1-links $\mathcal{L}_k \subset M_k^G$ to the framed neighborhood of $\mathcal{L}_1\subset M_1^G$ i.e. we have a homeomorphism of pairs 
    $$f_k:(M_k^G, \nu (\mathcal{L}_k )) \rightarrow (M_1^G, \nu(\mathcal{L}_1))$$ 
    which yields another homeomorphism of pairs $$\Tilde{f}_k: (M_k^*,\Gamma_k)\longrightarrow (M_1^*,\Gamma_1)$$ where $\Gamma_k$ is the belt 2-link of the surgery along $ \mathcal{L}$ in $M_k^*$. On the other hand, by Proposition \ref{ambient manifold}, there is a diffeomorphism $$\varphi_k:M_k^* \longrightarrow M_1^* .$$ 
    Therefore, $$\Tilde{f}_k  \circ \varphi_k^{-1}: (M_1^*, \varphi(\Gamma_k)) \longrightarrow (M_1^*,\Gamma_1)$$ is a homeomorphism of pairs.
    Since $M_1^*$ is of the form $N\#(S^2\times S^2)$ and has indefinite intersection form, a theorem of Wall \cite[Theorem 2]{wall1964diffeomorphisms} implies the existence a self diffeomorphism 
    $$\psi_k:M_1^* \longrightarrow M_1^*$$  
    such that the induced map on the second homology group
    $$(\Tilde{f}_k \circ \varphi_k^{-1} \circ \psi_k )_*:H_2(M_1^*) \longrightarrow H_2(M_1^*)$$
    is the identity map. Thus, results of Quinn \cite{quinn1986isotopy} and Perron \cite{perron1986pseudo} allow us to conclude that the map $$g_k:=\Tilde{f}_k \circ \varphi_k^{-1} \circ \psi_k : (M_1^* , \psi_k^{-1} \circ \varphi_k (\Gamma_k)) \longrightarrow (M_1^*, \Gamma_1)$$ is topologically isotopic to the identity. We conclude that the elements of the family 
    \begin{equation}\label{link family}
        \{ \psi_k^{-1} \circ \varphi_k (\Gamma_k) \vert k\in \mathbb{N} \}
    \end{equation}
    are all topologically isotopic. For brevity, denote $\psi_k^{-1} \circ \varphi_k (\Gamma_k) $ by $\Tilde{\Gamma}_k$ (with $\Tilde{\Gamma}_1 =\Gamma_1$).

    It remains to show that the elements of the family \eqref{link family} are not smoothly equivalent. Assume for the sake of contradiction that there exists two distinct positive integers $k$ and $j$ for which there is a diffeomorphism of pairs 
    $$\phi: (M_1^*, \Tilde{\Gamma}_k) \longrightarrow (M_1^*, \Tilde{\Gamma}_j).$$
    This implies the existence of the following diffeomorphism of pairs
    $$\varphi_j^{-1} \circ \psi_j \circ \phi \circ \psi_k^{-1} \circ \varphi_k: (M_k^*,\Gamma_k) \longrightarrow (M_j^*,\Gamma_j). $$
    Since 2-spheres in 4-manifolds admit a unique framing (up to isotopy), the result of surgery on $M_k^*$ along the 2-link $\Gamma_k$ is diffeomorphic to the result of the analogous surgery on $M_j^*$ along $\Gamma_j$. However, the surgery results are $M_k^G$ and $M_j^G$ which cannot be diffeomorphic by Proposition \ref{proposition 2.2 park}. Hence, it is impossible for $\Tilde{\Gamma}_k$ to be smoothly equivalent to $\Tilde{\Gamma}_j$.
     
\end{proof}

What is still missing to conclude Theoreom \ref{main theorem} is addressing the nullhomotopy of the 2-link.
\begin{proof}[Proof of Theorem \ref{main theorem}]
    Notice that if $G$ satisfies the condition \eqref{obstruction} then, by Lemma \ref{nullhomologous spheres}, the exotic 2-link constructed above is nullhomotopic. Theorem \ref{main theorem} follows by taking $M_G$ to be $M_1^*$ and $\Gamma$ to be $\Gamma_1$.
\end{proof}

\begin{remark}\label{primitve 2-links}
    We point out that for constructing exotic 2-links that are not necessarily nullhomotopic, Lemma \ref{ambient manifold} can be obtained in a simpler way: It is enough to add enough trivial components to $\mathcal{L} \subset M_k^G$ which, after surgery, will correspond to $S^2\times S^2$ stabilizations. Lemma \ref{ambient manifold} follows by Wall's stabilization theorem \cite{Wall1964stabilization}.
   
\end{remark}

\begin{remark}
    Finally, we note that any $n$-component 2-link $\Gamma$ in a closed simply-connected 4-manifold whose 2-link group satisfies Condition \ref{obstruction} is nullhomotopic. This can be seen by looking at $\Gamma$ as a belt 2-link and applying Lemma \ref{nullhomologous spheres}. Therefore, provided we can make 2-links with any 2-link group, the main challenge to making these 2-links nullhomotopic is in minimizing their number of components $n$ to fit Condition \ref{obstruction}. For example, this is not achieved in Remark \ref{primitve 2-links}.
\end{remark}

\bibliographystyle{myhalpha}
\bibliography{references} 

\end{document}